\def\rr{{\mathbb R}}
\def\rn{{{\rr}^n}}
\def\zz{{\mathbb Z}}
\def\nn{{\mathbb N}}
\def\cc{{\mathbb C}}
\def\cs{{\mathcal S}}
\def\cq{{\mathcal Q}}
\def\cp{{\mathcal P}}
\def\fz{\infty}
\def\az{\alpha}
\def\vz{\varphi}
\def\hs{\hspace{0.3cm}}
\def\ls{\lesssim}
\def\gfz{\genfrac{}{}{0pt}{}}
\def\rn{{{\mathbb R}^n}}
\def\rr{{\mathbb R}}
\def\cc{{\mathbb C}}
\def\zz{{\mathbb Z}}
\def\nn{{\mathbb N}}
\def\hs{\hspace{0.3cm}}
\def\fz{\infty}
\def\az{\alpha}
\def\supp{{\mathop\mathrm{\,supp\,}}}
\def\vz{\varphi}
\def\ls{\lesssim}
\def\laz{\langle}
\def\raz{\rangle}
\def\bmo{{{\mathrm {BMO}\,(\rn)}}}
\def\r{\right}
\def\lf{\left}
\newcommand{\dbt}{\dot{B}_{p,q}^{s,\tau}(\rn)}
\newcommand{\dft}{\dot{F}_{p,q}^{s,\tau}(\rn)}
\newcommand{\dbh}{B\dot{H}_{p,q}^{s,\tau}(\rn)}
\newcommand{\dfh}{F\dot{H}_{p,q}^{s,\tau}(\rn)}
\newcommand{\dsbt}{\dot{b}_{p,q}^{s,\tau}(\rn)}
\newcommand{\dsft}{\dot{f}_{p,q}^{s,\tau}(\rn)}
\newtheorem{thm}{Theorem}
\newtheorem{prop}{Proposition}
\newtheorem{rem}{Remark}
\newtheorem{cor}{Corollary}
\newtheorem{defn}{Definition}
\def\hs{\hspace{0.3cm}}
\begin{document}

\title{{\vspace{-5cm}\small\hfill\bf Appl. Anal. (to appear)}\\
\vspace{4cm}\bf\Large Relations among Besov-Type Spaces,
Triebel-Lizorkin-Type Spaces and Generalized Carleson
Measure Spaces\footnotetext{\hspace{-0.35cm} 2010 {\it
Mathematics Subject Classification}. Primary 42B35; Secondary 46E35.
\endgraf
{\it Key words and phrases}. Besov-type space, Triebel-Lizorkin-type space,
generalized Carleson measure space. \endgraf
The first author is supported by the National
Natural Science Foundation (Grant No. 11171027) of China
and Program for Changjiang Scholars and Innovative
Research Team in University of China, and the second (corresponding) author is supported by the National
Natural Science Foundation (Grant No. 11101038) of China.\endgraf
$^\ast$ Corresponding author.}}
\date{}
\author{Dachun Yang and Wen Yuan\,$^\ast$}
\maketitle

\begin{center}
\begin{minipage}{13.5cm}{\small
{\noindent{\bf Abstract}\quad
In this paper, the authors construct some counterexamples to show that
the generalized Carleson measure space and the Triebel-Lizorkin-type space
are not equivalent for certain parameters, which was claimed
to be true in [Taiwanese J. Math. 15 (2011), 919-926]. Moreover,
the authors show that for some special parameters, the generalized Carleson measure space,
the Triebel-Lizorkin-type space and the Besov-type space coincide with certain Triebel-Lizorkin space,
which answers a question posed
in Remark 6.11(i) of [Lecture Notes in Mathematics 2005,
Springer-Verlag, Berlin, 2010, xi+281 pp.]. In conclusion,
the Triebel-Lizorkin-type space and the Besov-type space
become the classical Besov spaces,
when the fourth parameter is sufficiently large.}}
\end{minipage}
\end{center}

\arraycolsep=1pt

\vspace{0.4cm}

Function spaces have
been widely used in various areas of analysis such as harmonic analysis
and partial differential equations.
In recent years, there has been increasing interest
in a new family of function spaces, called $Q_\az$ spaces
with $\az\in\rr$; see, for example, \cite{dx,ejpx,x,x06}
and their references for a history of these spaces.

On the other hand, the most known general scales of function spaces are the
scales of Besov spaces and Triebel-Lizorkin spaces.
It is well known that Triebel-Lizorkin
spaces ${\dot F}^s_{p,\,q}$ and ${F}^s_{p,\,q}$, and Besov spaces
${\dot B}^s_{p,\,q}$ and ${B}^s_{p,\,q}$ on $\rn$
respectively domains in $\rn$ for the full ranges of parameters
$s\in\rr$ and $p,\,q\in(0,\,\fz]$ were introduced between 1959 and
1975; see, for example, \cite{t83}. Moreover, it is known that
Triebel-Lizorkin spaces cover many well-known
classical concrete function spaces such as H\"older-Zygmund spaces,
Sobolev spaces, fractional Sobolev spaces (also often referred to as
Bessel-potential spaces), Hardy spaces and $\bmo$, which have their
own history. A comprehensive treatment of these function spaces and
their history can be found in Triebel's monographes \cite{t92, t06}.

Recently, Dafni and Xiao \cite{dx} introduced the Hardy-Hausdorff space
$\mathrm{HH}^1_{-\az}(\rn)$ with $\az\in(0,\min\{1,n/2\})$ and proved that
these spaces are predual spaces of $Q_\az(\rn)$. It was also asked in
\cite{dx} whether there exist some relations among $Q_\az(\rn)$, $\mathrm{HH}^1_{-\az}(\rn)$
and some classical function spaces such as Besov and Triebel-Lizorkin spaces.
To answer this question,
motivated by the Carleson measure characterization of
$Q_\az(\rn)$ spaces in \cite{dx},
we in \cite{yy1} introduced the Triebel-Lizorkin-type spaces $\dft$ with $s\in\rr$, $\tau\in[0,\fz)$, $p\in(1,\fz)$ and $q\in(1,\fz]$ and
their preduals, the Triebel-Lizorkin-Hausdorff spaces $\dfh$ with
$s\in\rr$, $p\in(1,\fz)$, $q\in[1,\fz)$ and
$\tau\in[0,\frac1{(\max\{p,\,q\})'}]$,
and proved therein that these spaces contain classical Triebel-Lizorkin spaces,
$Q$ spaces $Q_\az(\rn)$ and Hardy-Hausdorff spaces $\mathrm{HH}^1_{-\az}(\rn)$
as special cases.

We in \cite{yy2} further extended the spaces $\dft$
to all $p\in(0,\fz)$ and $q\in (0,\fz]$. Furthermore, the
Besov-type spaces $\dbt$ with $s\in\rr$, $\tau\in[0,\fz)$
and $p,\,q\in(0,\fz]$ and their preduals, the Besov-Hausdorff
spaces $\dbh$ with $s\in\rr$, $p,\,q\in[1,\fz)$, $\max\{p,\,q\}>1$
and $\tau\in[0,\frac1{(\max\{p,\,q\})'}]$,
were also introduced in \cite{yy2}. It is easy to see
that $\dbt$ and $\dbh$ cover the classical Besov spaces as special
cases. Some properties of the spaces $\dft$ and $\dbt$,
including the $\vz$-transform characterizations, Sobolev-type
embedding properties and smooth atomic and molecular decompositions of
these spaces, were also established in \cite{yy2}.

Recently, Lin and Wang in \cite{lw11} claimed
that the Triebel-Lizorkin-type space $\dft$ is equivalent to their
generalized Carleson measure space $CMO^{s,q}_{\tau q+1-q/p}(\rn)$
for all $s\in\rr$, $\tau\in[0,\fz)$ and $p,\,q\in(0,\fz)$.
\emph{We denote the index $\alpha$ in \cite{lw11} by $s$ here
as in \cite{yy1,yy2} in accord with the classical Triebel-Lizorkin
spaces when $\tau=0$.} However, in this paper, we first present some
counterexamples to show that this is not true when $\tau\in[0, 1/p)$ (see Proposition \ref{p4} below).
Moreover, by a totally different approach from \cite{lw11}
which may be problematic (see Remark \ref{r3} below),
we prove that for all $p\in(0,\fz]$, $q\in(0,\fz)$ and $\tau\in(1/p,\fz)$, or
$q=\fz$ and $\tau\in[1/p,\fz)$, the Triebel-Lizorkin-type space $\dft$
($p<\fz$) and the Besov-type space $\dbt$ are just the classical Triebel-Lizorkin space
$\dot{F}^{s+n(\tau-1/p)}_{\fz,\fz}(\rn)$ (see Theorem \ref{t1} below),
which further implies that
for all $s\in\rr$, $q\in(0,\fz)$ and $r\in(1,\fz)$,
the generalized Carleson measure space
$CMO^{s,q}_r(\rn)=\dot{F}^{s,r/q}_{q,q}(\rn)=\dot{F}^{s+n(r-1)/q}_{\fz,\fz}(\rn)$
with equivalent norms (see Corollary \ref{c3} below).
As a consequence, we see that for all $s\in\rr$, $p\in(0,\fz)$,
$q\in(0,\fz)$ and $\tau\in(1/p,\fz)$ or $q=\fz$ and $\tau\in[1/p,\fz)$,
$\dft=\dot{F}^{s+n(\tau-1/p)}_{\fz,\fz}(\rn)=CMO^{s,q}_{\tau q+1-q/p}(\rn)$
with equivalent norms; see Corollary \ref{c4}(i) below.
Thus, even in this case, Corollary \ref{c4} also improves
the main results in \cite{lw11}; see Remark \ref{r5} below.
Also, as a direct consequence of the main result (Theorem \ref{t1} below)
of this paper, we know that for all $s\in\rr$ and $p\in(0,\fz]$,
$\dot{B}^{s,1/p}_{p,\fz}(\rn)=\dot{B}^s_{\fz,\fz}(\rn)$ with
equivalent norms, which is sharp in the sense of Remark \ref{r4}
below. Moreover, all results obtained in this paper have inhomogeneous
versions and we only explicitly state the inhomogeneous version of
Theorem \ref{t1} at the end of this paper for similarity;
see Theorem \ref{t2} below. We remark that Theorem \ref{t2} below
answers a question posed in \cite[p.\,168,\,Remark 6.11(i)]{ysy};
see Remark \ref{r6} below.

To recall the notions of $\dbt$ and $\dft$, we need some notation.
Let $\mathcal{S}(\rn)$ be the set of all \emph{Schwartz functions} on $\rn$ endowed
with the usual topology and $\mathcal{S}'(\rn)$ its \emph{topology dual}, namely,
the space of all bounded linear functionals on $\mathcal{S}(\rn)$
endowed with the weak $\ast$-topology.
Following Triebel \cite{t83}, we set
$$\cs_\infty(\rn)\equiv\lf\{\varphi\in\cs(\rn):\ \int_\rn
\varphi(x)x^\gamma\,dx=0\ \mbox{for all multi-indices}\ \gamma\in
\lf(\nn\cup\{0\}\r)^n\r\}$$
and consider $\cs_\fz(\rn)$ as a
subspace of $\cs(\rn)$, including the topology. Use
$\cs'_\infty(\rn)$ to denote the {\it topological dual space} of
$\cs_\infty(\rn)$, namely, the set of all bounded linear
functionals on $\cs_\fz(\rn)$.
Let $\mathcal{P}(\rn)$ be the set of all \emph{polynomials} on $\rn$.
It is well known that
$\cs'_\fz(\rn)=\cs'(\rn)/\mathcal{P}(\rn)$ as topological spaces;
see, for example, \cite[Proposition 8.1]{ysy}.

Let $\mathcal{Q}$ be the set of all \emph{dyadic cubes} in $\rn$, namely,
$$\mathcal{Q}\equiv \{Q_{jk}\equiv 2^{-j}([0,1)^n+k):\ j\in\zz,\ k\in\zz^n\}.$$
For any $Q=Q_{jk}\in\mathcal{Q}$, let $x_Q\equiv 2^{-j}k$, $\ell(Q)$ be
the \emph{side-length} of $Q$, $j_Q\equiv-\log_2\ell(Q)$, and $\chi_Q$ be the
\emph{characteristic function} of $Q$.
For all $j\in\zz$ and $x\in\rn$, Schwartz functions $\vz$ and $Q\in\mathcal{Q}$,
let $\vz_j(x)\equiv 2^{jn}\vz(2^jx)$ and $\vz_Q(x)\equiv |Q|^{-1/2}\vz((x-x_Q)/\ell(Q))$.
Denote by $\widehat{\vz}$
the \emph{Fourier transform} of $\vz\in\mathcal{S}(\rn)$.

Let $\vz\in\mathcal{S}(\rn)$ such that
\begin{equation}\label{e1.1}
\supp \widehat{\vz}\subset \{\xi\in\rn:\,1/2\le|\xi|\le2\}\quad\mathrm{and}\quad
|\widehat{\vz}(\xi)|\ge C>0\hs\mathrm{when}\hs 3/5\le|\xi|\le 5/3.
\end{equation}

We now recall the notions of the Triebel-Lizorkin-type space $\dft$
and the Besov-type space $\dbt$
in \cite{yy1,yy2} as follows.

\begin{defn}\rm\label{d1}
Let $s\in\rr$, $\tau\in[0,\fz)$, $q\in(0,\fz]$ and $\vz\in\cs(\rn)$ satisfy \eqref{e1.1}.

(i) The \emph{Triebel-Lizorkin-type space} $\dft$ with $p\in(0,\fz)$
is defined to be the space of all $f\in \mathcal{S}'_\fz(\rn)$ such that
$$\|f\|_{\dft}\equiv
\sup_{P\in\mathcal{Q}}\frac1{|P|^{\tau}}\left\{\int_P\left[
\sum_{j=j_P}^\fz  2^{js q}|\vz_j\ast f(x)|^q\right]^{p/q}\,dx\right\}^{1/p}<\fz$$
with the usual modification made when $q=\fz$

(ii) The \emph{Besov-type space} $\dbt$ with $p\in(0,\fz]$ is defined to be the space of all $f\in \mathcal{S}'_\fz(\rn)$ such that
$$\|f\|_{\dbt}\equiv
\sup_{P\in\mathcal{Q}}\frac1{|P|^{\tau}}\left\{\sum_{j=j_P}^\fz  2^{js q}\left[\int_P
|\vz_j\ast f(x)|^p\,dx\right]^{q/p}\right\}^{1/q}<\fz$$
with the usual modifications made when $p=\fz$ or $q=\fz$.
\end{defn}

We also recall the Triebel-Lizorkin-Morrey space
$\dot{\mathcal E}_{u,p,q}^s(\rn)$ and the Besov-Morrey space $\dot{\mathcal N}_{u,p,q}^s(\rn)$ introduced in \cite{tx,st} as follows.

\begin{defn}\rm\label{d2}
Let $s\in\rr$, $0<p\le u<\infty,$ $q\in(0,\infty]$ and $\vz\in\cs(\rn)$ satisfy \eqref{e1.1}. The \emph{Triebel-Lizorkin-Morrey space}
$\dot{\mathcal E}_{u,p,q}^s(\rn)$ and the \emph{Besov-Morrey space} $\dot{\mathcal N}_{u,p,q}^s(\rn)$ are defined, respectively,
to be the spaces of all
$f\in\mathcal{S}'_\fz(\rn)$ such that
$$\| f \|_{\dot{\mathcal E}_{u,p,q}^s(\rn)}
\equiv \sup_{P \in {\mathcal Q}} |P|^{\frac{1}{u}-\frac{1}{p}}
\left\{ \int_P \left[ \sum_{j=-\infty}^\infty
2^{jsq}|\varphi_j\ast f(x)|^q \right]^{\frac{p}{q}} \,dx
\right\}^{\frac{1}{p}}<\infty$$
and
$$\| f \|_{\dot{\mathcal N}_{u,p,q}^s(\rn)}
\equiv \left\{\sum_{j=-\infty}^\infty\sup_{P \in {\mathcal Q}} |P|^{\frac{q}{u}-\frac{q}{p}}\left[  \int_P
2^{jsp}|\varphi_j\ast f(x)|^p  \,dx\right]^{\frac{q}{p}}
\right\}^{\frac{1}{q}}<\infty$$
with the usual modifications made when $q=\fz$.
\end{defn}

Some known relations among Triebel-Lizorkin spaces, Besov spaces, Triebel-Lizorkin-type spaces, Besov-type spaces,
Triebel-Lizorkin-Morrey spaces, Besov-Morrey spaces
and $Q$ spaces are summarized as follows. We refer to
\cite{fj90}, \cite[Propositions 3.1 and 3.2]{yy2} and \cite{syy} for more details.

\begin{prop}\label{p1}
Let $s\in\rr$, $\tau\in[0,\fz)$ and $q\in(0,\fz]$. Then

$\mathrm{(i)}$ $\dot{F}^{s,0}_{p,q}(\rn)=\dot{F}^s_{p,q}(\rn)$ for all $p\in(0,\fz)$
and $\dot{B}^{s,0}_{p,q}(\rn)=\dot{B}^s_{p,q}(\rn)$ for all $p\in(0,\fz]$.

$\mathrm{(ii)}$ For all $p\in(0,\fz)$, $\dot{F}^{s,1/p}_{p,q}(\rn)=\dot{F}^s_{\fz,q}(\rn)$
(\cite[Corollary 5.7]{fj90}).

$\mathrm{(iii)}$ For all $p,\,q\in(0,\fz)$,
$\dot{B}^s_{\fz,q}(\rn)$ is a proper subspace of $\dot{B}^{s,1/p}_{p,q}(\rn)$;
for all $q\in(0,\fz)$, $\dot{B}^{s,1/p}_{p,q}(\rn)\subseteq
\dot{B}^{s,1/q}_{q,q}(\rn) $ if $p\ge q$ and
$\dot{B}^{s,1/q}_{q,q}(\rn)\subseteq
\dot{B}^{s,1/p}_{p,q}(\rn) $ if $p\le q$
(\cite[Proposition 3.2]{yy2}).

$\mathrm{(iv)}$ If $\tau<0$, then
$\dot{F}^{s,\tau}_{p,q}(\rn)=\dot{B}^{s,\tau}_{p,q}(\rn)=\cp(\rn)$.

$\mathrm{(v)}$ $Q_\az(\rn)=\dot{F}^{\az,1/2-\az/n}_{2,2}(\rn)$ for all $\az\in(0,\min\{1, n/2\})$ (\cite[Corollary 3.1]{yy1}).

$\mathrm{(vi)}$ For all $0<p\le u<\fz$ and $q\in(0,\fz]$,
$\dot{\mathcal{E}}^s_{u,p,q}(\rn)=\dot{F}^{s,1/p-1/u}_{p,q}(\rn)$ and $\dot{\mathcal{N}}^s_{u,p,\fz}(\rn)=\dot{B}^{s,1/p-1/u}_{p,\fz}(\rn)$
with equivalent norms;
for all $0<p<u<\fz$ and $q\in(0,\fz)$, $\dot{\mathcal{N}}^s_{u,p,q}(\rn)\subsetneqq\dot{B}^{s,1/p-1/u}_{p,q}(\rn)$ (\cite[Theorem 1.1]{syy}).
\end{prop}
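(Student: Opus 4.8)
The bulk of Proposition \ref{p1} collects identities and embeddings already available in the literature, and my plan is to reduce each item to its stated source after reconciling notation (in particular the relabeling of the smoothness index noted in the introduction). Parts (ii), (iii), (v) and (vi) I would simply cite: (ii) from \cite[Corollary 5.7]{fj90}, (iii) from \cite[Proposition 3.2]{yy2}, (v) from \cite[Corollary 3.1]{yy1}, and (vi) from \cite[Theorem 1.1]{syy}. Before invoking them I would check that the parameter substitutions agree---for instance that the Morrey exponent $1/u$ in Definition \ref{d2} matches $\tau=1/p-1/u$ in \cite{syy}, and that $\az$ and $1/2-\az/n$ correspond to the indices $s$ and $\tau$ in \cite{yy1}. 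The two items that deserve a genuine (if short) argument directly from Definition \ref{d1} are (i) and (iv), so I would write those out.

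For (i) set $\tau=0$, so the normalizing factor $|P|^{-\tau}$ is identically $1$. For the embedding $\dot F^{s,0}_{p,q}(\rn)\hookrightarrow \dot F^s_{p,q}(\rn)$ and its reverse, I would argue by exhaustion: enlarging both the region of integration from $P$ to $\rn$ and the summation index from $j\ge j_P$ to all $j\in\zz$ shows that each dyadic cube $P$ contributes at most $\|f\|_{\dot F^s_{p,q}(\rn)}$, whence the supremum is controlled by the classical norm. Conversely, choosing an increasing sequence of dyadic cubes $P_k$ with $j_{P_k}\to-\fz$ and $\bigcup_k P_k=\rn$, the monotone convergence theorem recovers the full integral over $\rn$ of the complete square function, giving the reverse bound. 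The Besov assertion follows by the same exhaustion argument with the order of the $\ell^q$-sum and the $L^p$-integral interchanged.

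For (iv) I would first note that, by \eqref{e1.1}, $\widehat{\vz_j}$ vanishes near the origin, so $\vz_j\ast P=0$ for every polynomial $P$ and every $j\in\zz$; hence every polynomial has vanishing quasi-norm, consistent with the ambient space $\cs'_\fz(\rn)=\cs'(\rn)/\cp(\rn)$. The substantive direction is to show that the space is no larger than $\cp(\rn)$ when $\tau<0$: here the factor $|P|^{-\tau}=|P|^{|\tau|}$ blows up as $\ell(P)\to\fz$, so finiteness of the supremum forces the inner quantity $\int_P[\sum_{j\ge j_P}2^{jsq}|\vz_j\ast f|^q]^{p/q}\,dx$ to decay faster than any positive power of $|P|$. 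I would make this rigorous by passing to the $\vz$-transform and the associated sequence space from \cite{yy2}, where the same normalization forces every sequence coefficient to vanish, and then transfer back to conclude $\vz_j\ast f\equiv0$ for all $j$, i.e. $f\in\cp(\rn)$.

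The main obstacle I anticipate is precisely this converse in (iv): turning the divergence of the large-cube normalization into the vanishing of the Littlewood--Paley pieces seems to require the $\vz$-transform machinery rather than a bare manipulation of Definition \ref{d1}, and one must be careful that the argument is uniform in $p$ and $q$ and covers the endpoint modifications when $p=\fz$ or $q=\fz$. A secondary, purely bookkeeping difficulty is aligning the several normalization conventions across \cite{fj90,yy1,yy2,syy} so that the cited identities in (ii), (iii), (v) and (vi) read off with exactly the indices stated here.
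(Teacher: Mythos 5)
The paper gives no proof of Proposition \ref{p1} at all: it is presented as a summary of known results, with the sentence preceding it pointing to \cite{fj90}, \cite[Propositions 3.1 and 3.2]{yy2} and \cite{syy}. So for parts (ii), (iii), (v) and (vi) your plan of reconciling notation and citing those sources is exactly what the paper does, and your decision to actually argue (i) and (iv) from Definition \ref{d1} goes beyond the paper. Both of those arguments are essentially sound, with one concrete slip you should repair: there is no increasing sequence of dyadic cubes $P_k$ with $\bigcup_k P_k=\rn$. Every dyadic cube $2^{-j}([0,1)^n+k)$ containing a fixed point lies in a single ``generalized orthant'' (no dyadic cube contains the origin in its interior), so the union of any increasing chain is at most one of $2^n$ such orthants. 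The fix is standard: in (i) run the monotone convergence argument along a maximal chain in each of the $2^n$ orthants and add the resulting estimates, which costs only a factor $2^{n/p}$ and is harmless since the assertion is norm equivalence, not equality. The same localization is needed in (iv).

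On (iv) itself, you slightly overestimate the difficulty: no $\vz$-transform machinery is required. From $\tau<0$ and finiteness of the supremum one gets, for every dyadic $P$, that $\int_P\bigl[\sum_{j\ge j_P}2^{jsq}|\vz_j\ast f(x)|^q\bigr]^{p/q}\,dx\le C\,|P|^{\tau p}$, and letting $\ell(P)\to\fz$ along a chain in each orthant forces this integral to vanish on every bounded subset of each open orthant, hence $\vz_j\ast f=0$ a.e.\ there for every $j$. Since $\vz_j\ast f$ is a smooth function, it vanishes identically on $\rn$, and the Calder\'on reproducing formula (the Tauberian condition \eqref{e1.1} is exactly what is needed) gives $f=0$ in $\cs'_\fz(\rn)$, i.e.\ $f\in\cp(\rn)$. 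Your remark that polynomials have vanishing quasi-norm because $\widehat{\vz_j}$ vanishes near the origin correctly handles the trivial inclusion. With the orthant correction made, your writeup is a complete and correct justification of a statement the paper leaves entirely to citation.
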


The corresponding sequence spaces, $\dsft$ and $\dsbt$, of the spaces $\dft$
and $\dbt$, were also introduced in \cite{yy2}.

\begin{defn}\rm\label{d3} Let $s\in\rr$, $\tau\in[0,\fz)$ and $q\in(0,\fz]$.
The \emph{sequences spaces} $\dsft$ with $p\in(0,\fz)$ and $\dsbt$ with $p\in(0,\fz]$
are defined, respectively, to be the space of all sequences
$t\equiv\{t_Q\}_{Q\in\mathcal{Q}}\subset\cc$ such that $\|t\|_{\dsft}<\fz$
and $\|t\|_{\dsbt}<\fz$, where
$$\|t\|_{\dsft}\equiv
\sup_{P\in\mathcal{Q}}\frac1{|P|^{\tau}}\left\{\int_P\left(
\sum_{Q\subset P} \left[|Q|^{-s/n-1/2}|t_Q|\chi_Q(x)\right]^q\right)^{p/q}\,dx\right\}^{1/p}$$
and
$$\|t\|_{\dsbt}\equiv
\sup_{P\in\mathcal{Q}}\frac1{|P|^{\tau}}\left\{\sum_{j=j_P}^\fz\left(\int_P
\left[\sum_{\gfz{Q\subset P}{\ell(Q)=2^{-j}}} |Q|^{-s/n-1/2}|t_Q|\chi_Q(x)\right]^p\,dx\right)^{q/p}\right\}^{1/q}$$
with the usual modifications made when $p=\fz$ or $q=\fz$.
\end{defn}

Via the Calder\'on reproducing formula, we in \cite{yy2}
established the $\vz$-transform characterizations of the
spaces $\dft$ and $\dbt$, which implies the following conclusions.

\begin{prop}\label{p2} Let $s\in\rr$, $\tau\in[0,\fz)$, $q\in(0,\fz]$
and $\vz\in\cs(\rn)$ satisfy \eqref{e1.1}.

$\mathrm{(i)}$ For all $p\in(0,\fz)$,
$f\in\dft$ if and only if $f\in\cs'_\fz(\rn)$ and
$$\sup_{P\in\mathcal{Q}}\frac1{|P|^{\tau}}\left\{\int_P\left(
\sum_{Q\subset P} \left[|Q|^{-s/n-1/2}|\langle f,\vz_Q\rangle|\chi_Q(x)\right]^q\right)^{\frac pq}\,dx\right\}^{\frac 1p}<\fz.$$
Moreover, $\|f\|_{\dft}$ is equivalent to
$\|\{\langle f,\vz_Q\rangle\}_{Q\in\cq}\|_{\dsft}$
with equivalent constants independent of $f$.

$\mathrm{(ii)}$ For all $p\in(0,\fz]$, $f\in\dbt$ if and only if $f\in\cs'_\fz(\rn)$ and
$$\sup_{P\in\mathcal{Q}}\frac1{|P|^{\tau}}\left\{\sum_{j=j_P}^\fz\left(\int_P
\left[\sum_{\gfz{Q\subset P}{\ell(Q)=2^{-j}}} |Q|^{-s/n-1/2}|\langle f,\vz_Q\rangle|\chi_Q(x)
\right]^p\,dx\right)^{\frac qp}\right\}^{\frac 1q}<\fz.$$
Moreover, $\|f\|_{\dbt}$ is equivalent to
$\|\{\langle f,\vz_Q\rangle\}_{Q\in\cq}\|_{\dsbt}$
with equivalent constants independent of $f$.
\end{prop}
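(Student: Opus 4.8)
The plan is to deduce both characterizations from the $\vz$-transform formalism of Frazier--Jawerth \cite{fj90}, adapted to the $\tau$-weighted supremum-over-cubes structure of $\dft$ and $\dbt$. First I would fix a companion $\psi\in\cs(\rn)$ with $\supp\widehat\psi\subset\{1/2\le|\xi|\le 2\}$ which together with $\vz$ furnishes a Calder\'on reproducing identity: for every $f\in\cs'_\fz(\rn)$,
$$f=\sum_{Q\in\cq}\langle f,\vz_Q\rangle\,\psi_Q$$
with convergence in $\cs'_\fz(\rn)$. Since $\widehat\vz$ vanishes near the origin, $\vz_Q\in\cs_\fz(\rn)$ and the coefficients $\langle f,\vz_Q\rangle$ are well defined on $\cs'_\fz(\rn)$. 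With this in hand the proposition reduces to the boundedness of the analysis transform $S_\vz\colon f\mapsto\{\langle f,\vz_Q\rangle\}_{Q\in\cq}$ from $\dft$ into $\dsft$ (and from $\dbt$ into $\dsbt$) and of the synthesis transform $T_\psi\colon\{t_Q\}_{Q\in\cq}\mapsto\sum_Q t_Q\psi_Q$ in the reverse direction, because $f=T_\psi S_\vz f$ yields the two-sided estimate and hence the equivalence of norms.

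For the analysis bound I would use the elementary identity $\langle f,\vz_Q\rangle=|Q|^{1/2}(\wz\vz_{j}\ast f)(x_Q)$, where $\wz\vz(x)\equiv\overline{\vz(-x)}$ again satisfies \eqref{e1.1}, $\ell(Q)=2^{-j}$, and $\wz\vz_j(y)\equiv 2^{jn}\wz\vz(2^jy)$. Since $|Q|^{-s/n-1/2}|\langle f,\vz_Q\rangle|=2^{js}|(\wz\vz_{j}\ast f)(x_Q)|$, the discrete norm $\|S_\vz f\|_{\dsft}$ is exactly a sampled version of $\|f\|_{\dft}$ in which the continuous field $\vz_j\ast f(x)$ is evaluated at the cube corners $x_Q$. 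The passage from these samples to the continuous quantity is controlled by the Peetre-type maximal function $(\vz_j^\ast f)_a(x)\equiv\sup_{y\in\rn}|\vz_j\ast f(x-y)|(1+2^j|y|)^{-a}$; a standard argument (as in \cite{fj90,yy2}) shows that, for $a$ large enough depending on $n,p,q,\tau$, the $\dft$-norm of $\{(\vz_j^\ast f)_a\}_j$ is comparable to $\|f\|_{\dft}$, which gives $\|S_\vz f\|_{\dsft}\ls\|f\|_{\dft}$. The Besov case is identical after interchanging the summation in $j$ and the integral in $x$.

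For the synthesis bound I would estimate $\vz_j\ast(T_\psi t)$. Because $\widehat\vz$ and $\widehat\psi$ are both supported in the annulus, $\vz_j\ast\psi_Q$ vanishes unless $|j-j_Q|\le 1$, and on those scales it decays rapidly in the rescaled distance between $x$ and $x_Q$. This produces an almost-diagonal estimate bounding $|\vz_j\ast(T_\psi t)(x)|$ by a sum over $Q$ of $|t_Q|$ times a kernel decaying in both $|j-j_Q|$ and $2^{\min\{j,j_Q\}}|x-x_Q|$. Reducing powers by a small parameter $r$ below $\min\{1,p,q\}$ and invoking the Fefferman--Stein vector-valued maximal inequality for the Hardy--Littlewood operator $\cm$ on $L^{p/r}(\ell^{q/r})$ then bounds $\|T_\psi t\|_{\dft}$ by $\|t\|_{\dsft}$; for $\dbt$ one replaces the vector-valued maximal inequality by direct $\ell^q$-summation. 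Combining the two bounds with $f=T_\psi S_\vz f$ gives $\|f\|_{\dft}\sim\|S_\vz f\|_{\dsft}$, and likewise for $\dbt$.

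The main obstacle is the synthesis step in the presence of the Morrey-type factor $|P|^{-\tau}$ together with the supremum over all dyadic $P$. Unlike the classical case $\tau=0$, the reference cube $P$ couples local and global scales, so in estimating $\vz_j\ast(T_\psi t)$ over $P$ one must split the sum over $Q$ into cubes contained in a fixed dilate of $P$, which are handled directly by the maximal inequality, and the remaining far-away or over-large cubes, whose contribution must be reabsorbed into the supremum defining $\|t\|_{\dsft}$ by exploiting the rapid decay of the almost-diagonal kernel. Carrying out this reabsorption uniformly in $P$, while keeping the power-reduction parameter $r$ compatible with the full ranges of $p$, $q$ and $\tau$, is the delicate point; the Besov version $\dbt$ is comparatively softer, since there no $L^p(\ell^q)$ mixing occurs.
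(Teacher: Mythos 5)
The paper does not prove Proposition \ref{p2} directly; it records it as a consequence of the $\vz$-transform characterizations of $\dft$ and $\dbt$ established in \cite{yy2}, and your outline (Calder\'on reproducing formula, boundedness of the analysis operator via Peetre maximal functions, boundedness of the synthesis operator via almost-diagonal estimates and the Fefferman--Stein vector-valued maximal inequality, with the extra care needed for the $|P|^{-\tau}$ factor and the supremum over dyadic cubes) is exactly the argument carried out there. So your proposal is correct and follows essentially the same route as the paper's source proof.
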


\begin{rem}\rm\label{r1}
In lines 7 through 11 of \cite[p.\,921]{lw11}, Lin and Wang
said that the Triebel-Lizorkin-type space $\dft$
with $s,\,\tau\in\rr$, $p\in(1,\fz)$ and $q\in (1,\fz]$
was defined in \cite{yy1} as the space of all $f\in\cs'_\fz(\rn)=\cs'(\rn)/\mathcal{P}(\rn)$
such that
$$\|f\|_{\dft}\equiv
\lf\|\lf\{\langle f,\vz_Q\rangle\r\}_{Q\in\cq}\r\|_{\dsft}<\fz.$$
However, these spaces in \cite{yy1} were defined
as in Definition \ref{d1}. Moreover, since we did not establish
the $\vz$-transform characterization of these spaces in \cite{yy1},
we did not introduce the space $\dsft$ of sequences in \cite{yy1}.
Thus, Proposition \ref{p2}(i)
is not included in \cite{yy1}.
However, we do deduce Proposition \ref{p2}
from the $\vz$-transform characterizations of $\dft$ and $\dbt$
obtained in a later paper \cite{yy2}.
\end{rem}

The generalized Carleson measure space $CMO^{s,q}_r(\rn)$ for $s,\,r\in\rr$ and $q\in(0,\fz]$
and the space $\dot{B}BMO^{s,q}_p(\rn)$ for $s\in\rr$ and $p,\,q\in(0,\fz]$ were introduced, respectively, by Lin and Wang
in \cite{lw11} and \cite{lw0}.

\begin{defn}\rm\label{d4} Let $s\in\rr$ and $q\in(0,\fz]$.

(i) If $r\in\rr$, then the \emph{generalized Carleson measure space} $CMO^{s,q}_r(\rn)$ is defined to be the space of all
$f\in\cs'_\fz(\rn)$ such that
$$\|f\|_{CMO^{s,q}_r(\rn)}\equiv
\sup_{P\in\mathcal{Q}}\left\{|P|^{-r}\int_P\sum_{Q\subset P}
\left [|Q|^{-s/n-1/2}|\langle f,\vz_Q\rangle|\chi_Q(x)\right]^q\,dx\right\}^{1/q}<\fz$$
with the usual modification made when $q=\fz$.

(ii) If $p\in(0,\fz]$, the space $\dot{B}BMO^{s,q}_p(\rn)$ is defined to be the
space of all $f\in\cs'_\fz(\rn)$ such that
$\|f\|_{\dot{B}BMO^{s,q}_p(\rn)}<\fz$, where
$$\|f\|_{\dot{B}BMO^{s,q}_p(\rn)}
\equiv\sup_{P\in\mathcal{Q}}\left\{\sum_{v=j_P}^\fz
\left[\frac1{|P|}\sum_{\gfz{Q\subset P}{\ell(Q)=2^{-v}}}\left(|Q|^{-s/n-1/2+1/p}|\langle f, \vz_Q\rangle|\right)^p\right]^{q/p}\right\}^{1/q}$$
with the usual modifications made when $p=\fz$ or $q=\fz$.
\end{defn}

\begin{rem}\rm\label{r2}
(i) In \cite{lw11}, Lin and Wang claimed that the generalized Carleson measure space $CMO^{s,q}_r(\rn)$ was first introduced by themselves
in \cite{lw}.

(ii) As was mentioned in \cite{yy2}, the space $\dot{B}BMO^{s,q}_p(\rn)$ was introduced in \cite{lw0} which was the only preprint we had
from Lin and Wang when our paper \cite{yy2} was being written.
In \cite[p.\,463]{yy2}, we even showed that $\dot{B}BMO^{s,q}_p(\rn)$ is a special case of Besov-type spaces $\dbt$. The spaces $\dot{B}BMO^{s,q}_p(\rn)$
and $CMO^{s,q}_r(\rn)$ do obviously not coincide; see Proposition \ref{p3}
below.
\end{rem}

From Propositions \ref{p1} and \ref{p2}, and Definition \ref{d4},
it is easy to deduce that the spaces $CMO^{s,q}_r(\rn)$ and
$\dot{B}BMO^{s,q}_p(\rn)$ are, respectively, special
cases of Triebel-Lizorkin-type spaces, Triebel-Lizorkin-Morrey
spaces and Besov-type spaces as follows;
see also \cite[p.\,463]{yy2} and \cite[p.\,921]{lw11}.

\begin{prop}\label{p3} Let $s\in\rr$ and $q\in(0,\fz]$.

{\rm(i)} For all $r\in[0,\fz)$ and $q\in(0,\fz]$,
$\dot{F}^{s,r/q}_{q,q}(\rn)=CMO^{s,q}_r(\rn)$
with equivalent norms. In particular, when $r\in(0,1)$, $\dot{\mathcal{E}}^s_{\frac{q}{1-r},\,q,\,q}(\rn)=CMO^{s,q}_r(\rn)$.

{\rm(ii)} For all $p\in(0,\fz]$,
$\dot{B}^{s,1/p}_{p,q}(\rn)=\dot{B}BMO^{s,q}_p(\rn)$
with equivalent norms.
\end{prop}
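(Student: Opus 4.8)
The plan is to reduce both identities to straightforward computations on the corresponding sequence spaces, using the $\vz$-transform characterizations recorded in Proposition \ref{p2}. Throughout I put $t_Q\equiv\langle f,\vz_Q\rangle$ and compare the relevant sequence norm (with a specific choice of $\tau$) against the defining expression of the target space; the claimed equivalences then come from Proposition \ref{p2}, since the sequence norms will in fact turn out to be \emph{literally equal} to the $CMO^{s,q}_r(\rn)$ and $\dot{B}BMO^{s,q}_p(\rn)$ norms.

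For part (i), I would invoke Proposition \ref{p2}(i) with the parameters $p=q$ and $\tau=r/q$. With $p=q$ the two exponents in the definition of $\|\cdot\|_{\dsft}$ collapse into a single power $q$, so that $\|\{t_Q\}\|_{\dot{f}^{s,r/q}_{q,q}(\rn)}=\sup_{P\in\cq}|P|^{-r/q}\{\int_P\sum_{Q\subset P}[|Q|^{-s/n-1/2}|t_Q|\chi_Q(x)]^q\,dx\}^{1/q}$. Since $|P|^{-r/q}=(|P|^{-r})^{1/q}$, pulling the prefactor inside the $q$-th root reproduces exactly the expression defining $\|f\|_{CMO^{s,q}_r(\rn)}$ in Definition \ref{d4}(i). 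Hence $\dot{F}^{s,r/q}_{q,q}(\rn)=CMO^{s,q}_r(\rn)$ with equivalent norms. For the ``in particular'' clause, I would apply Proposition \ref{p1}(vi) with $p=q$ and $u=q/(1-r)$: one checks $1/p-1/u=1/q-(1-r)/q=r/q$ and $0<q\le q/(1-r)<\fz$ when $r\in(0,1)$, so $\dot{\mathcal E}^s_{q/(1-r),\,q,\,q}(\rn)=\dot{F}^{s,r/q}_{q,q}(\rn)=CMO^{s,q}_r(\rn)$.

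For part (ii), I would take $\tau=1/p$ in Proposition \ref{p2}(ii). The one genuine computation is to evaluate, for each fixed dyadic level $j$, the inner quantity $\int_P[\sum_{Q\subset P,\,\ell(Q)=2^{-j}}|Q|^{-s/n-1/2}|t_Q|\chi_Q(x)]^p\,dx$. The crux is that the dyadic cubes $Q$ of a common side-length $2^{-j}$ are pairwise disjoint, so the $p$-th power of the sum equals the sum of the $p$-th powers of the individual terms; integrating over $P$ and using $\int_P\chi_Q\,dx=|Q|$ (valid since $Q\subset P$) turns this integral into $\sum_{Q\subset P,\,\ell(Q)=2^{-j}}(|Q|^{-s/n-1/2+1/p}|t_Q|)^p$, where the extra weight $|Q|^{1/p}$ is exactly the one appearing in Definition \ref{d4}(ii). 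It then remains to track the outer prefactor: raising $|P|^{-1/p}$ to the power $q$ and distributing it across the sum over $j$ rewrites it as $1/|P|$ inside each level, matching $\|f\|_{\dot{B}BMO^{s,q}_p(\rn)}$ term by term. This yields $\|\{t_Q\}\|_{\dot{b}^{s,1/p}_{p,q}(\rn)}=\|f\|_{\dot{B}BMO^{s,q}_p(\rn)}$, and Proposition \ref{p2}(ii) gives $\dot{B}^{s,1/p}_{p,q}(\rn)=\dot{B}BMO^{s,q}_p(\rn)$.

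I do not expect a serious obstacle here: once the $\vz$-transform characterizations of Proposition \ref{p2} are available, the proposition is essentially a bookkeeping identity between sequence norms. The only point requiring care is the disjointness argument together with the exact tracking of the power of $|Q|$ in part (ii) and of the power of $|P|$ when the supremum prefactor is absorbed into the $q$-th (resp. $q$-th) root; the usual modifications when $p=\fz$ or $q=\fz$ are handled by replacing the relevant sums with suprema, with no change in the structure of the argument.
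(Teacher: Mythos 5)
Your proposal is correct and follows exactly the route the paper indicates: the paper derives Proposition \ref{p3} as an immediate consequence of Proposition \ref{p2} and Definition \ref{d4} (together with Proposition \ref{p1}(vi) for the Morrey identification), which is precisely your reduction to the observation that the sequence norms with $p=q$, $\tau=r/q$ (resp.\ $\tau=1/p$) literally coincide with the $CMO^{s,q}_r(\rn)$ and $\dot{B}BMO^{s,q}_p(\rn)$ norms. Your worked-out details (pulling $|P|^{-r/q}$ inside the $q$-th root, and the disjointness of same-level dyadic cubes giving the weight $|Q|^{1/p}$) are accurate.
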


The following is just \cite[Theorem 1]{lw11} with $\az$ replaced by $s$,
which is the main result of \cite{lw11}.

\setcounter{thm}{0}
\renewcommand{\thethm}{\Alph{thm}}

\begin{thm}\label{ta}
Let $s,\,\tau\in\mathbb{R}$ and $p,\,q\in(0,\infty)$.
Then
$$\|\{\laz f,\vz_Q\raz\}_{Q\in\cq}\|_{\dsft}
\sim \|\{\laz f,\vz_Q\raz\}_{Q\in\cq}\|_{\dot{f}^{s,\tau+1/q-1/p}_{q,q}(\rn)}.$$
\end{thm}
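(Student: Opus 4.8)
The claim in Theorem~\ref{ta} is that for any sequence $t=\{t_Q\}_{Q\in\cq}$,
$$\|t\|_{\dsft}\sim\|t\|_{\dot{f}^{s,\tau+1/q-1/p}_{q,q}(\rn)}.$$
The plan is to write out both sequence (quasi-)norms explicitly using Definition~\ref{d3} and compare them directly, since the right-hand side has $p$ replaced by $q$ (so the inner and outer integrability exponents coincide) and the smoothness-type parameter $\tau$ shifted to $\tau+1/q-1/p$. Writing $\sigma\equiv\tau+1/q-1/p$, the right-hand norm is
$$\|t\|_{\dot{f}^{s,\sigma}_{q,q}(\rn)}
=\sup_{P\in\cq}\frac1{|P|^{\sigma}}\lf\{\int_P\sum_{Q\subset P}\lf[|Q|^{-s/n-1/2}|t_Q|\chi_Q(x)\r]^q\,dx\r\}^{1/q},$$
where the two summation/integration operations have collapsed because the inner exponent $q$ and outer exponent $p$ in the definition of $\dsft$ now agree (both equal $q$).

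**The key computation.** My first step is to interchange the sum over $Q\subset P$ with the integral over $P$ on the right-hand side, which is legitimate by Tonelli's theorem since everything is nonnegative. For each fixed $Q\subset P$, the term $\chi_Q(x)$ integrates to $|Q|$, so
$$\int_P\sum_{Q\subset P}\lf[|Q|^{-s/n-1/2}|t_Q|\chi_Q(x)\r]^q\,dx
=\sum_{Q\subset P}|Q|\lf[|Q|^{-s/n-1/2}|t_Q|\r]^q.$$
This reduces the right-hand norm to a purely discrete expression. The left-hand side $\|t\|_{\dsft}$ involves the genuinely nested structure $\big(\int_P(\sum_Q[\cdots]^q)^{p/q}\big)^{1/p}$, which in general does \emph{not} collapse. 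Thus the real content is to show these two quantities are comparable despite the apparent mismatch between the nested norm (left) and the collapsed single-exponent norm (right), after accounting for the $|P|^{-\tau}$ versus $|P|^{-\sigma}$ normalizations.

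**Handling the nested norm on the left and the main obstacle.** For the left-hand side I would exploit that, for a fixed dyadic cube $P$, the inner sum $\sum_{Q\subset P}[|Q|^{-s/n-1/2}|t_Q|\chi_Q(x)]^q$ is, as a function of $x\in P$, constant on each dyadic subcube at a given scale, so the outer $(p/q)$-power integral can be organized scale by scale. The strategy is to bound $\|t\|_{\dsft}$ above and below by the discrete quantity $\sup_P |P|^{-\tau}\big(\sum_{Q\subset P}|Q|[|Q|^{-s/n-1/2}|t_Q|]^q\big)^{1/q}$ times appropriate powers of $|P|$ coming from H\"older's inequality applied to the outer integral over $P$ (whose measure is $|P|$). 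I expect the factor $|P|^{1/q-1/p}$ produced by comparing the $L^p$ and $L^q$ norms over $P$ to be \emph{exactly} what converts the normalization $|P|^{-\tau}$ into $|P|^{-(\tau+1/q-1/p)}=|P|^{-\sigma}$, which is the origin of the index shift in the theorem. The main obstacle is that H\"older's inequality over $P$ is generally a one-sided estimate: passing between the nested $(p,q)$ structure and the collapsed $(q,q)$ structure will only give a genuine equivalence because here the \emph{outer} exponent on the right has also been changed to $q$, making both sides honestly of mixed type $L^q(\ell^q)$. I would therefore check carefully that when one sets $p=q$ on the right the two double-indexed quantities coincide term by term, reducing the whole theorem to the identity $\int_P|\cdots|^q=\sum_{Q}|Q|[\cdots]^q$ above, so that the claimed equivalence is in fact an equality with the stated normalization; the delicate point to verify is precisely that the power of $|P|$ bookkeeping matches, i.e. that $-\tau+(1/q-1/p)=-\sigma$ holds as intended rather than with an unexpected constant or extra scale factor.
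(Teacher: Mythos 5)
There is a fundamental problem here: the statement you are trying to prove is false for a nontrivial range of parameters, and the paper does not prove it --- it quotes it as the (erroneous) main result of \cite{lw11} and then \emph{disproves} it. Indeed, if $q\in(p,\fz)$ and $\tau\in[0,1/p)$ is such that $\sigma\equiv\tau+1/q-1/p<0$, then $\dot{F}^{s,\sigma}_{q,q}(\rn)=\cp(\rn)$ by Proposition \ref{p1}(iv), while $\dft$ contains $\cs_\fz(\rn)$; and Proposition \ref{p4} gives an explicit sequence witnessing $\dot{f}^{s,\sigma}_{q,q}(\rn)\subsetneqq\dsft$ when $q>p$ and $\tau\in(0,1/p-1/q]$: take $t_Q\equiv|R_j|^{s/n+1/2+\tau-1/p}$ for $Q=R_j\equiv[0,2^{-j})^n$ and $t_Q\equiv 0$ otherwise, for which $\|t\|_{\dsft}\ls 1$ but $\|t\|_{\dot{f}^{s,\sigma}_{q,q}(\rn)}=\fz$ because the geometric series $\sum_{j\ge k}2^{-jnq\sigma}$ diverges when $\sigma\le 0$. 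The statement is only proved in this paper for $\tau\in(1/p,\fz)$ (and trivially for $p=q$, and for $\tau=1/p$ by \cite[Corollary 5.7]{fj90}), and there by a completely different route: Theorem \ref{t1} shows both sides collapse to the single space $\dot{F}^{s+n(\tau-1/p)}_{\fz,\fz}(\rn)$, whose sequence norm is just $\sup_{Q}|Q|^{-s/n-(\tau-1/p)-1/2}|t_Q|$.

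The precise gap in your argument is the H\"older/Jensen step. You correctly observe that it is one-sided, but then assert without justification that the equivalence nonetheless holds because ``both sides are honestly of mixed type $L^q(\ell^q)$.'' What Jensen's inequality actually gives, for $q\ge p$, is $\lf\{\int_P g^{p/q}\,dx\r\}^{1/p}\le|P|^{1/p-1/q}\lf\{\int_P g\,dx\r\}^{1/q}$ with $g(x)=\sum_{Q\subset P}[|Q|^{-s/n-1/2}|t_Q|\chi_Q(x)]^q$, i.e.\ only the embedding $\dot{f}^{s,\sigma}_{q,q}(\rn)\subset\dsft$ (exactly the one-sided inclusion recorded at the start of the proof of Proposition \ref{p4}). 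The reverse inequality would require controlling the collapsed $L^q(\ell^q)$ quantity by the nested $L^p(\ell^q)$ one uniformly in $P$, and the tower-of-cubes sequence above shows this genuinely fails: the mass of $t$ concentrates on a nested chain of cubes, so the outer $L^p$ average over $P$ sees only a thin set at each scale while the $L^q$ average does not. No bookkeeping of powers of $|P|$ can repair this, since the two sequence spaces are simply not equal for these parameters.
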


The following corollary is immediately deduced from Theorem \ref{ta},
which is just \cite[Corollary 6]{lw11}
with $\az$ replaced by $s$.

\setcounter{cor}{0}
\renewcommand{\thecor}{\Alph{thm}}

\begin{cor}\label{ca}
Let $s,\,\tau\in\mathbb{R}$ and $p,\,q\in(0,\infty)$.
Then $\|f\|_{\dft}
\sim \|f\|_{CMO^{s,q}_{\tau q+1-q/p}(\rn)}.$
\end{cor}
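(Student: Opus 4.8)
The plan is to deduce Corollary \ref{ca} by combining the $\vz$-transform characterization of $\dft$ in Proposition \ref{p2}(i) with Theorem \ref{ta} and a direct comparison of definitions, so that the only nontrivial input is Theorem \ref{ta} itself.

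First I would invoke Proposition \ref{p2}(i), which for all $p\in(0,\fz)$ gives
$$\|f\|_{\dft}\sim\lf\|\lf\{\laz f,\vz_Q\raz\r\}_{Q\in\cq}\r\|_{\dsft}$$
with equivalent constants independent of $f$, thereby reducing matters to the level of sequence spaces. Applying Theorem \ref{ta} to the particular sequence $\{\laz f,\vz_Q\raz\}_{Q\in\cq}$ then yields
$$\lf\|\lf\{\laz f,\vz_Q\raz\r\}_{Q\in\cq}\r\|_{\dsft}\sim\lf\|\lf\{\laz f,\vz_Q\raz\r\}_{Q\in\cq}\r\|_{\dot{f}^{s,\tau+1/q-1/p}_{q,q}(\rn)}.$$

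The remaining step is to identify the right-hand side with the $CMO$ norm, and here I would simply compare definitions. In Definition \ref{d3}, taking $p=q$ makes the exponent $p/q$ equal to $1$, so the inner $\ell^q$-sum and the outer $L^q$-integral merge; with Morrey parameter $\tau+1/q-1/p$ one obtains, for $t_Q=\laz f,\vz_Q\raz$,
$$\lf\|\lf\{\laz f,\vz_Q\raz\r\}_{Q\in\cq}\r\|_{\dot{f}^{s,\tau+1/q-1/p}_{q,q}(\rn)}
=\sup_{P\in\cq}\frac1{|P|^{\tau+1/q-1/p}}\lf\{\int_P\sum_{Q\subset P}\lf[|Q|^{-s/n-1/2}|\laz f,\vz_Q\raz|\chi_Q(x)\r]^q\,dx\r\}^{1/q}.$$
Comparing this with $\|f\|_{CMO^{s,q}_r(\rn)}$ in Definition \ref{d4}(i), where the factor $|P|^{-r}$ under the $q$-th root is exactly $|P|^{-r/q}$ outside it, I see the two expressions coincide precisely when $\tau+1/q-1/p=r/q$, that is, when $r=\tau q+1-q/p$; thus this last step is an exact identity rather than merely an equivalence. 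Chaining the three relations gives $\|f\|_{\dft}\sim\|f\|_{CMO^{s,q}_{\tau q+1-q/p}(\rn)}$, as claimed.

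I do not expect any genuine obstacle here, since Theorem \ref{ta} supplies the only substantial ingredient; the sole point requiring care is the parameter bookkeeping, namely verifying that the Carleson exponent $r=\tau q+1-q/p$ is exactly the one matching the Morrey parameter $\tau+1/q-1/p$ of the $\dot{f}_{q,q}$ sequence space under the normalization $r=q\tau'$. (Alternatively, one could route through Proposition \ref{p3}(i), which already records $\dot{F}^{s,r/q}_{q,q}(\rn)=CMO^{s,q}_r(\rn)$ with equivalent norms, combined once more with Proposition \ref{p2}(i) applied to the indices $p=q$ and $\tau+1/q-1/p$; this replaces the direct computation by two applications of known equivalences but reaches the same conclusion.)
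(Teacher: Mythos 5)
Your deduction coincides with the paper's own: Corollary \ref{ca} is presented there exactly as an immediate consequence of Theorem \ref{ta}, obtained by combining the $\vz$-transform characterization in Proposition \ref{p2}(i) with the observation that $\|t\|_{\dot{f}^{s,r/q}_{q,q}(\rn)}$ coincides with the $CMO^{s,q}_r(\rn)$ expression precisely when $r=\tau q+1-q/p$; your exponent bookkeeping for that identification is correct, and the alternative route through Proposition \ref{p3}(i) that you mention is likewise consistent with how the paper organizes these identities.

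The genuine problem lies not in the implication but in its premise. Theorem \ref{ta} is not a result of this paper; it is a quoted claim from Lin--Wang which the paper sets out to refute. For $\tau\in[0,1/p)$ and $q\in(p,\fz)$ with $\tau+1/q-1/p<0$, the space $\dot{F}^{s,\tau+1/q-1/p}_{q,q}(\rn)$ collapses to $\cp(\rn)$ while $\dft$ contains $\cs_\fz(\rn)$, and Proposition \ref{p4} exhibits explicit sequences witnessing the failure of Theorem \ref{tb} (hence of Theorem \ref{ta} and Corollary \ref{ca}) for $\tau\in(0,1/p-1/q]$. So your argument correctly establishes that Theorem \ref{ta} implies Corollary \ref{ca} --- which is all the paper asserts at this point --- but the remark that ``Theorem \ref{ta} supplies the only substantial ingredient, so I expect no obstacle'' is exactly backwards: that ingredient is false on a nontrivial parameter range, and Corollary \ref{ca} as stated for all $\tau\in\rr$ fails with it. The conclusion survives only for $p=q$, for $\tau=1/p$ (Frazier--Jawerth; see Proposition \ref{p1}(ii)), and for $\tau\in(1/p,\fz)$, where the paper reproves it by an entirely different argument as Corollary \ref{c4}.
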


Moreover, Theorem \ref{ta} is a direct consequence of
the following Theorem \ref{tb}, which is \cite[Theorem 2]{lw11}
with $\az$ replaced by $s$.

\begin{thm}\label{tb}
Let $s,\,\tau\in\mathbb{R}$ and $p,\,q\in(0,\infty)$.
Then $\|t\|_{\dsft}
\sim \|t\|_{\dot{f}^{s,\tau+1/q-1/p}_{q,q}(\rn)}.$
\end{thm}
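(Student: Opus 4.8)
The plan is to reduce both sequence norms to a common scalewise quantity and then recognize the asserted equivalence as a reverse-H\"older phenomenon in disguise. For $P\in\cq$ and $x\in P$ set $g_P(x)\equiv\sum_{Q\subseteq P,\,x\in Q}|Q|^{-sq/n-q/2}|t_Q|^q$, so that by Definition~\ref{d3},
$$\|t\|_{\dsft}=\sup_{P\in\cq}|P|^{-\tau}\left\{\int_P g_P(x)^{p/q}\,dx\right\}^{1/p}.$$
The reason the right-hand space has both lower indices equal to $q$ is that the nested powers then collapse: since $\int_P\chi_Q(x)\,dx=|Q|$ for every dyadic $Q\subseteq P$, the integrand is no longer raised to a fractional power and
$$\int_P g_P(x)\,dx=\sum_{Q\subseteq P}|Q|^{1-sq/n-q/2}|t_Q|^q,$$
whence $\|t\|_{\dot{f}^{s,\tau+1/q-1/p}_{q,q}(\rn)}=\sup_{P\in\cq}|P|^{-(\tau+1/q-1/p)}\{\int_P g_P\}^{1/q}$. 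Thus the claim is exactly a comparison, uniform in $P$, between the localized mixed quantity $|P|^{-\tau}\|g_P\|_{L^{p/q}(P)}^{1/q}$ and the localized average $|P|^{-(\tau+1/q-1/p)}\|g_P\|_{L^1(P)}^{1/q}$, the shift $1/q-1/p$ being forced by the homogeneity in $|P|$.

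First I would dispose of the elementary half by H\"older's inequality on the fixed cube $P$, where the exponent balance reproduces this shift exactly. If $p\ge q$, then $p/q\ge1$ and $\int_P g_P\le(\int_P g_P^{p/q})^{q/p}|P|^{1-q/p}$; taking the supremum over $P$ gives $\|t\|_{\dot{f}^{s,\tau+1/q-1/p}_{q,q}(\rn)}\le\|t\|_{\dsft}$, the power of $|P|$ cancelling because $-(\tau+1/q-1/p)+(1/q-1/p)=-\tau$. If $p\le q$, then the reverse (Jensen) form $\int_P g_P^{p/q}\le(\int_P g_P)^{p/q}|P|^{1-p/q}$ yields the opposite inclusion $\|t\|_{\dsft}\le\|t\|_{\dot{f}^{s,\tau+1/q-1/p}_{q,q}(\rn)}$ by the same bookkeeping. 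So in either case one of the two inequalities is free.

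The substance, and what I expect to be the main obstacle, is the remaining \emph{reverse} inequality: bounding $\|g_P\|_{L^{p/q}(P)}$ by $\|g_P\|_{L^1(P)}$ when $p\ge q$ (respectively $\|g_P\|_{L^1(P)}$ by $\|g_P\|_{L^{p/q}(P)}$ when $p\le q$), uniformly in $P$. Here H\"older is powerless and one must use the fact that the hypothesis controls the Carleson functional not only on $P$ but on \emph{every} dyadic subcube $R\subseteq P$. The natural tool is a stopping-time (Calder\'on--Zygmund) decomposition of $P$ into maximal subcubes on which $|R|^{-(\tau+1/q-1/p)q}\sum_{Q\subseteq R}|Q|^{1-sq/n-q/2}|t_Q|^q$ first exceeds a dyadically increasing threshold, followed by summation of a geometric series in these thresholds to upgrade $L^1$ control to higher integrability, in the spirit of a John--Nirenberg self-improvement; I would attempt precisely this. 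The decisive point is that such self-improvement is genuinely scale-invariant only at the endpoint $\tau=1/p$, i.e.\ $\tau+1/q-1/p=1/q$, which is the authentic $CMO^{s,q}_1(\rn)=\dot F^s_{\fz,q}(\rn)$ (BMO-type) situation in which reverse H\"older holds. For $\tau<1/p$ I expect the geometric summation in the stopping-time argument to fail to close, so that the equivalence should be provable for $\tau\ge1/p$ but break down for $\tau\in[0,1/p)$; isolating whether the reverse inequality survives below the endpoint is the crux, and I would be prepared to find that it does not.
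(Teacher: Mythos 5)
Your closing instinct is the correct one, and you should trust it: Theorem \ref{tb} is \emph{false} as stated. It is quoted from \cite{lw11} precisely so that the paper can refute it: Proposition \ref{p4} disproves it for $q\in(p,\fz)$ and $\tau\in(0,1/p-1/q]$, and the equivalence is confirmed only for $\tau\in(1/p,\fz)$ (Corollary \ref{c4}). Your reduction to comparing $\|g_P\|_{L^{p/q}(P)}$ with $\|g_P\|_{L^1(P)}$, and the observation that H\"older gives one inequality for free with the shift $1/q-1/p$ absorbing the change of $\tau$, agree with the paper (the embedding $\dot{b}^{s,\tau+1/q-1/p}_{q,q}(\rn)\subset\dsft$ for $q>p$ is obtained there exactly by H\"older's inequality). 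What your plan is missing is the step that turns the suspicion into a disproof: a concrete sequence on which the reverse inequality fails. The right candidate is the configuration where $g_P$ is as far from constant on $P$ as possible, namely a tower of nested cubes: set $R_j\equiv[0,2^{-j})^n$, $t_{R_j}\equiv|R_j|^{s/n+1/2+\tau-1/p}$ and $t_Q\equiv0$ otherwise. For $q\in(p,\fz)$ and $\tau\in(0,1/p-1/q]$ one computes $\|t\|_{\dsft}\ls1$ (after integration the sum over the tower is a convergent geometric series because $\tau>0$), while $\|t\|_{\dot{f}^{s,\tau+1/q-1/p}_{q,q}(\rn)}=\fz$ because $\sum_{j\ge k}|R_j|^{(\tau-1/p)q+1}$ diverges precisely when $\tau\le 1/p-1/q$. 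There is also a softer refutation: when $\tau+1/q-1/p<0$ the right-hand space degenerates to polynomials (Proposition \ref{p1}(iv)) while $\dft$ still contains $\cs_\fz(\rn)$.

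For the range $\tau\ge1/p$, the John--Nirenberg stopping-time machinery you propose is genuinely needed only at the single endpoint $\tau=1/p$, where the statement is the reverse-H\"older result $\dot{f}^{s,1/p}_{p,q}(\rn)=\dot f^{s}_{\fz,q}(\rn)$ of Frazier and Jawerth (\cite[Corollary 5.7]{fj90}; Proposition \ref{p1}(ii)). For $\tau>1/p$ the paper's route (Theorem \ref{t1}) is far more elementary and closes the argument without any self-improvement: both $\dsft$ and $\dot{f}^{s,\tau+1/q-1/p}_{q,q}(\rn)$ collapse to the same space $\dot{f}^{s+n(\tau-1/p)}_{\fz,\fz}(\rn)$. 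Indeed $\|t\|_{\dot{f}^{s+n(\tau-1/p)}_{\fz,\fz}(\rn)}\le\|t\|_{\dsft}$ is immediate by testing on $P=Q$, and conversely bounding each $|t_Q|$ by this supremum reduces the inner sum to $\sum_{j\ge j_P}2^{-jn(\tau-1/p)q}$, a convergent geometric series; the same computation applies to the right-hand space because $\tau+1/q-1/p>1/q$. So off the endpoint the "reverse H\"older phenomenon" you anticipate is real but trivial, since both sides degenerate to an $\ell^\fz$-type norm on the coefficients.
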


Indeed, Theorems \ref{ta} and \ref{tb} and Corollary \ref{ca}
when $p=q$ are obvious,
and when $\tau=1/p$ are just \cite[Corollary 5.7]{fj90}; see
also Proposition \ref{p1}(ii).
However, it seems that Theorems \ref{ta} and \ref{tb} and
Corollary \ref{ca} may be not true for some parameters.

\begin{rem}\rm\label{r3}
It seems that there exist two gaps in the proof of Theorem \ref{tb}
in \cite{lw11}. For the convenience of the reader,
in this remark, we use the same notation as in pages 922 and 923,
and page 925 of \cite{lw11}.

First, as in \cite[p.\,922]{lw11}, for all $\az\in\rr$, dyadic cubes $P$,
sequences $\textbf{s}=\{s_Q\}_{Q\in\mathcal{Q}}$ and $x\in\rn$, let
$$G_P^{\az,\tau,q}(\textbf{s})(x)\equiv |P|^{-\tau+1/q}\left\{
\sum_{Q\subset P} \left[|Q|^{-\az/n-1/2}|s_Q|\chi_Q(x)\right]^q\right\}^{1/q},$$
$$m^{\az,\tau,q}(\textbf{s})(x)\equiv\sup_{P\in\mathcal{Q}}\inf\left\{\varepsilon:\
|\{x\in P:\ G_P^{\az,\tau,q}(\textbf{s})(x)> \varepsilon\}|<|P|/4\right\}$$
and
$$v(x)\equiv\inf\{j\in\zz:\ G_P^{\az,\tau,q}(\textbf{s})(x)\le
m^{\az,\tau,q}(\textbf{s})(x),\ \ell(P)=2^{-j}\}.$$

The definition of $v(x)$ is problematic.
It seems that the infimum should be taken over all
dyadic cubes $P$ containing $x$; otherwise $v(x)\equiv -\fz$.

Even if this change is made, it is not clear whether
$G_P^{\az,\tau,q}(\textbf{s})(x)$ is monotonic on $P$,
that is, $G_{P_1}^{\az,\tau,q}(\textbf{s})(x)
\le G_{P_2}^{\az,\tau,q}(\textbf{s})(x)$ when $P_1\subset P_2$
for all $x\in\rn$.
Then the first equality \cite[p.\,923]{lw11}, namely,
$$E_Q\equiv\{x\in Q : 2^{-v(x)}\ge \ell(Q)\}=\{x\in Q: G_{Q}^{\az,\tau,q}(\textbf{s})(x)
\le m^{\az,\tau,q}(\textbf{s})(x)\},$$
is problematic. More precisely, the embedding
$$\{x\in Q : 2^{-v(x)}\ge \ell(Q)\}\subset\{x\in Q: G_{Q}^{\az,\tau,q}(\textbf{s})(x)
\le m^{\az,\tau,q}(\textbf{s})(x)\}$$
may be not true. So, all the proofs break down here.

Second, for all
sequences $\textbf{t}=\{t_Q\}_Q\in \dot{F}^{\az,\tau}_{p,q}$,
let $\mathcal{Q}(\textbf{t})$ be the collection of all dyadic cubes $Q$ so that
$t_Q\neq0$ and enumerated as $\mathcal{Q}(\textbf{t})\equiv\{P_1,P_2,P_3,\cdots\}$.
It was claimed in \cite[p.\,925]{lw11} that
$\textbf{t}_m$ converges to $\textbf{t}$ in $\dot{F}^{\az,\tau}_{p,q}$
as $m\to\infty$, where $\textbf{t}_m$ is a sequence
containing $n$ non-zero elements of $\textbf{t}$, namely,
$\textbf{t}_m\equiv\{(t_m)_Q\}_{Q\in\cq}$ is defined by setting $(t_m)_Q\equiv t_Q$
if $Q\in\{P_1,\cdots,P_m\}$, otherwise $(t_m)_Q\equiv0$
(\emph{We replace $n$ in \cite[p.\,925]{lw11} by $m$ here to
distinguish the dimension of $\rn$}).
However, this may also not be true
when $\tau>0$. For example, when $\az=0$, $p=q=2$ and $\tau=1/2$,
then $\dot{f}^{0,1/2}_{2,2}$ ($=\dot{f}^0_{\fz,2}$)
is the corresponding sequence space of
$\bmo$. If $\textbf{t}_m\to \textbf{t}$
in $\dot{f}^{0,1/2}_{2,2}$, applying the $\vz$-transform characterization of $\bmo$, we see that $\mathcal{S}_\infty(\rn)$ is dense
in $\bmo$. But, it is well known that this is not the case.
\end{rem}

Indeed, Theorems \ref{ta} and \ref{tb} are not true when $\tau\in[0,1/p)$.
To see this, let $\tau\in[0,1/p)$ and $q\in (p,\infty)$ such that
$\tau+1/q-1/p<0$. Then by Proposition \ref{p1}(iv), the space $\dot{F}^{s,\tau+1/q-1/p}_{q,q}(\rn)=\mathcal{P}(\rn)$.
However, it was proved in \cite[Proposition 3.1]{yy2} that
the space
$\dft$ when $\tau\in[0,\fz)$ contains $\mathcal{S}_\infty(\rn)$,
which is a contradiction.

The following proposition give a more concrete
counterexample to Theorem \ref{tb}.
Recall that $\dsft=\dsbt$ if $p=q\in(0,\fz)$.

\begin{prop}\label{p4}
Let $s\in\rr$.

$\mathrm{(i)}$ For all $p\in(0,\fz)$, if $q\in (p,\fz)$ and $\tau \in (0,1/p-1/q]$, or
$q=\fz$ and $\tau \in (0,1/p-1/q)$,
the space $\dot{b}^{s,\tau+1/q-1/p}_{q,q}(\rn)$ is a proper subspace of $\dsft$.

$\mathrm{(ii)}$ For all $p\in(0,\fz)$, if $q\in (p,\fz)$ and $\tau \in (0,1/p-1/q]$, or
$q=\fz$ and $\tau \in [0,1/p-1/q)$,
the space $\dot{b}^{s,\tau+1/q-1/p}_{q,q}(\rn)$ is a proper subspace of $\dsbt$.
\end{prop}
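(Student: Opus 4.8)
The plan is to reduce both norms to a single transparent quantity, obtain the two inclusions from one application of Hölder's inequality, and then split the parameter range into a trivial part and a boundary part that genuinely requires an explicit sequence. Throughout write $\sigma\equiv\tau+1/q-1/p$ and, for $t=\{t_Q\}_{Q\in\cq}$, set $u_Q\equiv|Q|^{-s/n-1/2}|t_Q|$. The elementary fact driving everything is that, for a fixed dyadic cube $P$ and a fixed scale $j$, the cubes $Q\subset P$ with $\ell(Q)=2^{-j}$ are pairwise disjoint, so $[\sum_Q u_Q\chi_Q]^\rho=\sum_Q u_Q^\rho\chi_Q$ for every $\rho>0$. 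Collapsing the inner sums in Definition \ref{d3} this way, I rewrite
$$\|t\|_{\dot b^{s,\sigma}_{q,q}(\rn)}=\sup_{P\in\cq}|P|^{-\sigma}\left(\sum_{Q\subset P}u_Q^q|Q|\right)^{1/q},$$
while $\|t\|_{\dsft}$ and $\|t\|_{\dsbt}$ become suprema over $P$ of, respectively, the $L^p(P)$-norm and the scale-$\ell^q$ of $L^p(P)$-norms of the functions built from the $u_Q$. For the inclusions, fix $P$: since $p\le q$, Hölder on the finite-measure cube $P$ gives $\|g\|_{L^p(P)}\le|P|^{1/p-1/q}\|g\|_{L^q(P)}$, and because $|P|^{-\tau}|P|^{1/p-1/q}=|P|^{-\sigma}$ while the $L^q(P)$-norms reassemble into $(\sum_{Q\subset P}u_Q^q|Q|)^{1/q}$, this yields cube by cube $\|t\|_{\dsft}\le\|t\|_{\dot b^{s,\sigma}_{q,q}(\rn)}$ and, applying the same estimate at each scale $j$ before summing, $\|t\|_{\dsbt}\le\|t\|_{\dot b^{s,\sigma}_{q,q}(\rn)}$. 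This proves $\dot b^{s,\sigma}_{q,q}(\rn)\subset\dsft$ and $\dot b^{s,\sigma}_{q,q}(\rn)\subset\dsbt$.

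For properness one first checks that $\sigma\le0$ in every listed range. When $\sigma<0$ the space $\dot b^{s,\sigma}_{q,q}(\rn)$ is trivial: for any $t$ with some $t_{Q_0}\ne0$, letting dyadic $P\supset Q_0$ run with $|P|\to\fz$ forces $|P|^{-\sigma}(\sum_{Q\subset P}u_Q^q|Q|)^{1/q}\ge|P|^{-\sigma}u_{Q_0}|Q_0|^{1/q}\to\fz$, so $\|t\|_{\dot b^{s,\sigma}_{q,q}(\rn)}=\fz$ and hence $\dot b^{s,\sigma}_{q,q}(\rn)=\{0\}$. Since $\dsft$ and $\dsbt$ contain every nonzero finitely supported sequence, both inclusions are strict whenever $\sigma<0$; this settles all cases with $q=\fz$ and, for $q<\fz$, all $\tau\in(0,1/p-1/q)$.

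The remaining boundary case is $q\in(p,\fz)$ and $\tau=1/p-1/q$, where $\sigma=0$, so $\dot b^{s,0}_{q,q}(\rn)=\dot B^s_{q,q}(\rn)$ with norm $(\sum_Q u_Q^q|Q|)^{1/q}$. Here I would exhibit one sequence in $\dsft\cap\dsbt$ but not in $\dot b^{s,0}_{q,q}(\rn)$. Choose pairwise disjoint unit dyadic cubes $\{P^{(m)}\}_{m\ge1}$, inside each a single dyadic cube $Q^{(m)}$ with $\ell(Q^{(m)})=2^{-m}$, and set $t_{Q^{(m)}}$ so that $u_{Q^{(m)}}=2^{mn/q}$, all other $t_Q=0$. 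Then $\sum_Q u_Q^q|Q|=\sum_m 2^{mn}2^{-mn}=\fz$, so $t\notin\dot b^{s,0}_{q,q}(\rn)$.

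The main obstacle is to verify that $\|t\|_{\dsft}$ and $\|t\|_{\dsbt}$ are finite, i.e. that the supremum over all $P$ does not blow up; I would split it into two cases. In case (a), $P$ meets the support of at most one block $Q^{(m)}$; since $\tau>0$ the maximum over such $P$ is attained at the smallest cube $P=Q^{(m)}$ and equals $u_{Q^{(m)}}|Q^{(m)}|^{1/p-\tau}=u_{Q^{(m)}}|Q^{(m)}|^{1/q}=1$. In case (b), $P$ has side $\ge1$ and contains several blocks; disjointness of the $Q^{(m)}$ together with $|P|^{-\tau}\le1$ bounds the relevant quantity by $(\sum_m u_{Q^{(m)}}^p|Q^{(m)}|)^{1/p}$ for $\dsft$ and, since the scales $\ell(Q^{(m)})=2^{-m}$ are distinct, by $(\sum_m (u_{Q^{(m)}}|Q^{(m)}|^{1/p})^q)^{1/q}$ for $\dsbt$. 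Because $u_{Q^{(m)}}^p|Q^{(m)}|=2^{-mn(q-p)/q}$ and $(u_{Q^{(m)}}|Q^{(m)}|^{1/p})^q=2^{-mn(q-p)/p}$, both series converge since $q>p$. Thus both norms are finite, the sequence $t$ witnesses strictness at the boundary, and the proof is complete.
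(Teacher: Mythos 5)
Your proof is correct, but it reaches properness by a genuinely different route from the paper. The inclusion step is the same in both arguments (H\"older's inequality cube by cube, which the paper states without detail). For strictness, the paper exhibits a \emph{single} sequence supported on the nested tower $R_j\equiv[0,2^{-j})^n$, $j\in\zz$, with $t_{R_j}\equiv|R_j|^{s/n+1/2+\tau-1/p}$, and checks directly that its $\dsft$- and $\dsbt$-norms are finite (a geometric series, using $\tau>0$ and $q>p$) while its $\dot{b}^{s,\tau+1/q-1/p}_{q,q}(\rn)$-norm diverges (using $\tau\le 1/p-1/q$); one example covers the whole stated range at once. You instead split according to the sign of $\sigma\equiv\tau+1/q-1/p$: when $\sigma<0$ you observe that $\dot{b}^{s,\sigma}_{q,q}(\rn)$ is the zero sequence space (the sequence analogue of Proposition \ref{p1}(iv), which the paper invokes in the discussion preceding the proposition but not inside its proof), so properness is immediate there; only the boundary $\sigma=0$, i.e.\ $q\in(p,\fz)$ and $\tau=1/p-1/q$, requires a construction, for which you use pairwise disjoint cubes $Q^{(m)}$ at distinct scales rather than a nested tower. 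Both approaches are sound: yours isolates where the real content lies and makes every norm computation trivial by disjointness (in particular it absorbs the $q=\fz$ cases, which the paper leaves as ``similar''), while the paper's single example is more uniform across the parameter range. One small point to tighten: in the boundary construction you must place the unit cubes $P^{(m)}$ so that arbitrarily many of them lie in a common dyadic cube (for instance, all inside $[0,\fz)^n$), because the $\dot{b}^{s,0}_{q,q}(\rn)$-norm is a supremum of partial sums over $Q\subset P$ and two dyadic cubes in different ``quadrants'' of $\rn$ need not have a common dyadic ancestor; with that choice your divergence claim $\sup_{P\in\cq}\sum_{Q\subset P}u_Q^q|Q|=\fz$ is fully justified.
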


\begin{proof}
(i) The embedding
$\dot{b}^{s,\tau+1/q-1/p}_{q,q}(\rn)\subset \dsft$ is a direct
consequence of H\"older's inequality.
We only show that these two spaces are not equivalent
in the case that $q\in (p,\fz)$. The proof of the case that
$q=\fz$ is similar and we omit the details.

To this end, for all $j\in\zz$, let $R_j\equiv[0, 2^{-j})^n$. Define $t\equiv\{t_Q\}_Q$ by setting
$t_Q\equiv|R_j|^{s/n+1/2+\tau-1/p}$ when $Q=R_j$ for some $j\in\zz$, otherwise
$t_Q\equiv0$. Then, by $\tau>0$, we conclude that
\begin{eqnarray*}
\|t\|_{\dsft}
&&=\sup_{P\in\mathcal{Q}}\frac1{|P|^{\tau}}\left\{\int_P
\left(\sum_{Q\subset P} \left[|Q|^{-s/n-1/2}|t_Q|\chi_Q(x)\right]^q\right)^{p/q}\,dx\right\}^{1/p}\\
&&=\sup_{k\in\zz}|R_k|^{-\tau}\left\{\int_{R_k}
\left(\sum_{j=k}^\fz  \left[|R_j|^{-s/n-1/2}|t_{R_j}|\chi_{R_j}(x)
\right]^q\right)^{p/q}\,dx\right\}^{1/p}\\
&&\le\sup_{k\in\zz}|R_k|^{-\tau}\left\{
\sum_{j=k}^\fz |R_j|^{(\tau-1/p)p}|R_j|\right\}^{1/p}\sim1,
\end{eqnarray*}
while from $\tau\le 1/p-1/q$, it follows that
\begin{eqnarray*}
\|t\|_{\dot{b}^{s,\tau+1/q-1/p}_{q,q}(\rn)}
&&=\sup_{P\in\mathcal{Q}}|P|^{-(\tau+1/q-1/p)}\left\{\int_P
\sum_{Q\subset P} \left[|Q|^{-s/n-1/2}|t_Q|\chi_Q(x)\right]^q\,dx\right\}^{1/q}\\
&&=\sup_{k\in\zz}|R_k|^{-(\tau+1/q-1/p)}\left\{\int_{R_k}
\sum_{j=k}^\fz|R_j|^{(\tau-1/p)q}\chi_{R_j}(x)\,dx\right\}^{1/q}\\
&&=\sup_{k\in\zz}|R_k|^{-(\tau+1/q-1/p)}\left\{
\sum_{j=k}^\fz |R_j|^{(\tau-1/p)q}|R_j|\right\}^{1/q}=\infty.
\end{eqnarray*}
Thus, $\|t\|_{\dsft}$ and $\|t\|_{\dot{b}^{s,\tau+1/q-1/p}_{q,q}(\rn)}$
are not equivalent, which implies that
$$\dot{b}^{s,\tau+1/q-1/p}_{q,q}(\rn)\subsetneqq\dsft.$$

(ii) Similarly, by H\"older's inequality, we see that $\dot{b}^{s,\tau+1/q-1/p}_{q,q}(\rn)\subset \dsbt$.
Again, we only show that $\dot{b}^{s,\tau+1/q-1/p}_{q,q}(\rn)\subsetneqq\dsbt$
in the case $q\in (p,\fz)$. The proof of the case that $q=\fz$ is
similar and we omit the details.

Let $t$ be as in the proof of (i). Then from $\tau\le 1/p-1/q$, we
infer that $\|t\|_{\dot{b}^{s,\tau+1/q-1/p}_{q,q}(\rn)}=\fz$. However,
by $\tau>0$, we obtain
\begin{eqnarray*}
\|t\|_{\dsbt}
&&=\sup_{P\in\mathcal{Q}}\frac1{|P|^{\tau}}\left\{\sum_{j=j_P}^\fz
\left(\int_P \sum_{\gfz{Q\subset P}{\ell(Q)=2^{-j}}} \left[|Q|^{-s/n-1/2}|t_Q|\chi_Q(x)\right]^p\,dx\right)^{q/p}\right\}^{1/q}\\
&&=\sup_{k\in\zz}|R_k|^{-\tau}\left\{\sum_{j=k}^\fz
\left(\int_{R_k} \left[|R_j|^{-s/n-1/2}|t_{R_j}|\chi_{R_j}(x)
\right]^p\,dx\right)^{q/p}\right\}^{1/q}\\
&&\le\sup_{k\in\zz}|R_k|^{-\tau}\left\{
\sum_{j=k}^\fz |R_j|^{\tau q}\right\}^{1/q}\sim1,
\end{eqnarray*}
which completes the proof of Proposition \ref{p4}.
\end{proof}

When $\tau\in(1/p,\fz)$, we use a totally
different approach from the proof of \cite[Theorem 2]{lw11}
to obtain the following conclusions,
which have independently interest and may be useful in applications.

\setcounter{thm}{0}
\renewcommand{\thethm}{\arabic{thm}}

\begin{thm}\label{t1}
Let $s\in\rr$, $q\in (0,\fz]$.

$\mathrm{(i)}$ For all $p\in(0,\fz)$, $q\in(0,\fz)$ and $\tau\in(1/p,\fz)$, or
$q=\fz$ and $\tau\in[1/p,\fz)$,
$$\dft=\dot{F}^{s+n(\tau-1/p)}_{\fz,\fz}(\rn)$$
with equivalent norms.

$\mathrm{(ii)}$ For all $p\in(0,\fz]$, $q\in(0,\fz)$ and $\tau\in(1/p,\fz)$, or
$q=\fz$ and $\tau\in[1/p,\fz)$,
$$\dbt=\dot{B}^{s+n(\tau-1/p)}_{\fz,\fz}(\rn)$$
with equivalent norms.
\end{thm}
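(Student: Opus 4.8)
The plan is to set $\sigma:=s+n(\tau-1/p)$ and to exploit that the target space satisfies $\dot{F}^{\sigma}_{\fz,\fz}(\rn)=\dot{B}^{\sigma}_{\fz,\fz}(\rn)$ with common (quasi-)norm $\sup_{j\in\zz}2^{j\sigma}\|\vz_j\ast f\|_{L^{\fz}(\rn)}$. Since $\vz$ satisfies \eqref{e1.1}, we have $\vz\in\cs_{\fz}(\rn)$, so $\vz_j\ast f$ is a well-defined function for every $f\in\cs'_{\fz}(\rn)$ and $j\in\zz$; hence all quantities below are meaningful. Both (i) and (ii) follow from two inequalities, and the decisive observation is that the \emph{harder} inequality is literally the same for the two scales: keeping only the term $j=j_P$ shows that both the $\dft$- and the $\dbt$-(quasi-)norm, which I denote generically by $N$, dominate the single-block quantity $|P|^{-\tau}2^{j_Ps}\{\int_P|\vz_{j_P}\ast f|^p\}^{1/p}$.

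First I would prove the embeddings $\dot{F}^{\sigma}_{\fz,\fz}(\rn)\subset\dft$ and $\dot{B}^{\sigma}_{\fz,\fz}(\rn)\subset\dbt$. Writing $A:=\sup_{j,x}2^{j\sigma}|\vz_j\ast f(x)|$ and fixing a cube $P$ with $j_P=k$, I insert $|\vz_j\ast f(x)|\le A2^{-j\sigma}$ into the definitions. For the $F$-scale the inner sum becomes $\sum_{j\ge k}2^{jsq}|\vz_j\ast f|^q\le A^q\sum_{j\ge k}2^{jq(s-\sigma)}$; since $s-\sigma=n(1/p-\tau)<0$ when $\tau>1/p$ (and $\le 0$ when $\tau\ge 1/p$, which covers $q=\fz$ after replacing the sum by a supremum), this geometric series converges and is $\ls 2^{kq(s-\sigma)}$. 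Integrating the resulting constant over $P$ (of measure $2^{-kn}$) and multiplying by $|P|^{-\tau}=2^{kn\tau}$ produces the factor $2^{k[n\tau+(s-\sigma)-n/p]}$, whose exponent vanishes identically by the choice of $\sigma$. Taking the supremum over $P$ gives $\|f\|_{\dft}\ls A$. The Besov case is identical after exchanging the roles of the $j$-sum and the spatial integral, and the same cancellation occurs.

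The reverse inequalities $\dft\subset\dot{F}^{\sigma}_{\fz,\fz}(\rn)$ and $\dbt\subset\dot{B}^{\sigma}_{\fz,\fz}(\rn)$ are where the work lies, and I would argue identically for both. The single-block extraction shows that, for every dyadic cube $P$ of side $2^{-k}$,
$$\int_P|\vz_k\ast f(x)|^p\,dx\le N^p\,2^{-kn\tau p}\,2^{-ksp},$$
uniformly over all cubes of the fixed generation $k$. To convert this into the pointwise bound $2^{k\sigma}|\vz_k\ast f(x_0)|\ls N$, I would use the Peetre maximal function $(\vz_k^{*}f)_a(x):=\sup_{y\in\rn}|\vz_k\ast f(x-y)|(1+2^k|y|)^{-a}$ together with the standard majorization, valid for $a>n/p$ since $\supp\widehat{\vz_k\ast f}\subset\{2^{k-1}\le|\xi|\le 2^{k+1}\}$,
$$|\vz_k\ast f(x_0)|^p\le(\vz_k^{*}f)_a(x_0)^p\ls 2^{kn}\int_\rn\frac{|\vz_k\ast f(y)|^p}{(1+2^k|x_0-y|)^{ap}}\,dy.$$
Splitting $\rn$ into dyadic annuli $\{2^{\ell-1}\le 1+2^k|x_0-y|\le 2^{\ell}\}$ and covering each by $O(2^{\ell n})$ cubes of side $2^{-k}$, on which the displayed uniform bound applies with weight $\sim 2^{-\ell ap}$, the integral is $\ls 2^{kn}N^p2^{-kn\tau p}2^{-ksp}\sum_{\ell\ge 0}2^{\ell(n-ap)}$; choosing $a>n/p$ makes the series finite. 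Taking $p$-th roots yields $|\vz_k\ast f(x_0)|\ls N\,2^{kn(1/p-\tau)}2^{-ks}=N\,2^{-k\sigma}$, and the supremum over $k$ and $x_0$ gives the claim.

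The main obstacle is precisely this last step: turning the averaged $L^p$-information on cubes of a single scale into a genuine pointwise estimate, for which the band-limitedness of $\vz_k\ast f$, the Peetre maximal inequality, the annulus decomposition, and the summability condition $a>n/p$ are the essential tools. I would also record the routine modifications: when $q=\fz$ the $j$-sum is replaced by a supremum, which only requires $s-\sigma\le 0$, i.e.\ $\tau\ge 1/p$, explaining the enlarged parameter range in that case; and when $p=\fz$ in part (ii) the spatial integral becomes an essential supremum, whence the Peetre step is superfluous because the continuous function $\vz_k\ast f$ already satisfies $|\vz_k\ast f(x_0)|\le\esup_P|\vz_k\ast f|$ for a cube $P$ of generation $k$ with $x_0\in\overline{P}$.
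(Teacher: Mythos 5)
Your proof is correct, but it takes a genuinely different route from the paper's. The paper invokes the $\vz$-transform characterizations of $\dft$, $\dbt$ and $\dot{F}^{s}_{\fz,\fz}(\rn)$ (Proposition \ref{p2} together with \cite{fj90}) to reduce the theorem to the sequence-space identity $\dsft=\dot{f}^{s+n(\tau-1/p)}_{\fz,\fz}(\rn)$; there the direction you find hard is literally trivial (for fixed $Q$ take $P=Q$ in the supremum and keep the single term indexed by $Q$; the exponents match exactly), and the only computation is the geometric series $\sum_{j\ge j_P}2^{-jn(\tau-1/p)q}\ls 2^{-j_Pn(\tau-1/p)q}$ in the converse direction --- the same cancellation you observe in your first embedding. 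Your argument stays instead at the level of the Littlewood--Paley definition, where the single-block extraction only yields averaged $L^p$ information on cubes of generation $k$, and you upgrade this to a pointwise bound via the Peetre maximal function and the band-limitedness of $\vz_k\ast f$; this is precisely the Plancherel--Polya-type estimate that the $\vz$-transform characterization packages away, so in effect you re-derive inline the piece of machinery the paper cites as a black box. Your route buys self-containedness (no appeal to the discrete characterization of these spaces); the paper's buys brevity --- a half-page computation with no maximal functions. If you write yours in full, note that the majorization $(\vz_k^{*}f)_a(x_0)^p\ls 2^{kn}\int_\rn|\vz_k\ast f(y)|^p(1+2^k|x_0-y|)^{-ap}\,dy$ needs the usual a priori finiteness/bootstrapping step (start with $a$ large, using that $\vz_k\ast f$ has polynomial growth, then descend to $a>n/p$), and that for $q<\fz$ the geometric series in your first embedding genuinely requires $\tau>1/p$, which is why the theorem admits $\tau=1/p$ only when $q=\fz$.
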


\begin{proof}
(i) By the $\vz$-transform characterizations of the spaces $\dft$
in \cite{yy2} and the space $\dot{F}^s_{p,q}(\rn)$ in \cite{fj90},
to prove (i), it suffices to show that
$\dsft=\dot{f}^{s+n(\tau-1/p)}_{\fz,\fz}(\rn)$
with equivalent norms, where $\dot{f}^{s+n(\tau-1/p)}_{\fz,\fz}(\rn)$
is the sequence space of $\dot{F}^{s+n(\tau-1/p)}_{\fz,\fz}(\rn)$;
see \cite{fj90}.

To see $\|\cdot\|_{\dsft}\sim\|\cdot\|_{\dot{f}^{s+n(\tau-1/p)}_{\fz,\fz}(\rn)}$,
we recall that for all $t\equiv\{t_Q\}_{Q\in\cq}$,
$$\|t\|_{\dsft}\equiv
\sup_{P\in\mathcal{Q}}\frac1{|P|^{\tau}}\left\{\int_P\left(
\sum_{Q\subset P} \left[|Q|^{-s/n-1/2}|t_Q|\chi_Q(x)\right]^q\right)^{p/q}\,dx\right\}^{1/p}$$
and
$$\|t\|_{\dot{f}^{s+n(\tau-1/p)}_{\fz,\fz}(\rn)}\equiv
\sup_{Q\in\mathcal{Q}}|Q|^{-s/n-(\tau-1/p)-1/2}|t_Q|.$$
Obviously, we have
$\|t\|_{\dot{f}^{s+n(\tau-1/p)}_{\fz,\fz}(\rn)}\le \|t\|_{\dsft}$
for all sequences $t\equiv\{t_Q\}_{Q\in\mathcal{Q}}$.
On the other hand, by the assumption on $\tau$, we conclude that
\begin{eqnarray*}
\|t\|_{\dsft}&&=
\sup_{P\in\mathcal{Q}}\frac1{|P|^{\tau}}\left\{\int_P\left(
\sum_{j=j_P}^\fz\sum_{\gfz{Q\subset P}{\ell(Q)=2^{-j}}} \left[|Q|^{-s/n-1/2}|t_Q|\chi_Q(x)\right]^q\right)^{p/q}\,dx\right\}^{1/p}\\
&&\le \|t\|_{\dot{f}^{s+n(\tau-1/p)}_{\fz,\fz}(\rn)}\\
&&\hs\times\sup_{P\in\mathcal{Q}}\frac1{|P|^{\tau}}\left\{\int_P\left[
\sum_{j=j_P}^\fz\sum_{\gfz{Q\subset P}{\ell(Q)=2^{-j}}} |Q|^{(\tau-1/p)q}\chi_Q(x)\right]^{p/q}\,dx\right\}^{1/p}\\
&&=\|t\|_{\dot{f}^{s+n(\tau-1/p)}_{\fz,\fz}(\rn)}\\
&&\hs\times\sup_{P\in\mathcal{Q}}\frac1{|P|^{\tau}}\left\{\int_P\left[
\sum_{j=j_P}^\fz 2^{-jn(\tau-1/p)q}\chi_P(x)\right]^{p/q}\,dx\right\}^{1/p}\\
&&\ls \|t\|_{\dot{f}^{s+n(\tau-1/p)}_{\fz,\fz}(\rn)}
\sup_{P\in\mathcal{Q}}\frac1{|P|^{\tau}}\left\{\int_P |P|^{(\tau-1/p)p}
\,dx\right\}^{1/p}\sim \|t\|_{\dot{f}^{s+n(\tau-1/p)}_{\fz,\fz}(\rn)},
\end{eqnarray*}
which further implies that
$\dft=\dot{F}^{s+n(\tau-1/p)}_{\fz,\fz}(\rn)$ with equivalent norms
and completes the proof of (i).

(ii) Similarly, we only need to
show that $\dsbt=\dot{b}^{s+n(\tau-1/p)}_{\fz,\fz}(\rn).$
The inequality
$\|\cdot\|_{\dot{b}^{s+n(\tau-1/p)}_{\fz,\fz}(\rn)}\le \|\cdot\|_{\dsbt}$
is trivial. On the other hand, by the assumption on $\tau$, we see that
\begin{eqnarray*}
\|t\|_{\dsbt}&&=
\sup_{P\in\mathcal{Q}}\frac1{|P|^{\tau}}\left\{\sum_{j=j_P}^\fz\left(\int_P
\sum_{\gfz{Q\subset P}{\ell(Q)=2^{-j}}} \left[|Q|^{-s/n-1/2}|t_Q|\chi_Q(x)\right]^p\,dx\right)^{q/p}\right\}^{1/q}\\
&&\le \|t\|_{\dot{b}^{s+n(\tau-1/p)}_{\fz,\fz}(\rn)}\\
&&\hs\times\sup_{P\in\mathcal{Q}}\frac1{|P|^{\tau}}\left\{\sum_{j=j_P}^\fz\left[\int_P
\sum_{\gfz{Q\subset P}{\ell(Q)=2^{-j}}} |Q|^{(\tau-1/p)p}\chi_Q(x)\,dx\right]^{q/p}\right\}^{1/q}\\
&&=\|t\|_{\dot{b}^{s+n(\tau-1/p)}_{\fz,\fz}(\rn)}\\
&&\hs\times\sup_{P\in\mathcal{Q}}\frac1{|P|^{\tau}}\left\{\sum_{j=j_P}^\fz 2^{-jn(\tau-1/p)q}\left[\int_P
\chi_P(x)\,dx\right]^{q/p}\right\}^{1/q}
\sim \|t\|_{\dot{b}^{s+n(\tau-1/p)}_{\fz,\fz}(\rn)},
\end{eqnarray*}
which completes the proof of Theorem \ref{t1}.
\end{proof}

Observe that $\tau+1/q-1/p>1/q$ when $\tau\in(1/p,\fz)$.
As a direct consequence of Theorem \ref{t1},
we have the following conclusions, comparing with \cite[Theorem 1]{lw11}
(see also Theorem \ref{ta}).

\setcounter{cor}{0}
\renewcommand{\thecor}{\arabic{cor}}

\begin{cor}\label{c1}
Let $s\in\rr$, $q\in (0,\fz]$.

$\mathrm{(i)}$ For all $p\in(0,\fz)$, $q\in(0,\fz)$ and $\tau\in(1/p,\fz)$, or
$q=\fz$ and $\tau\in[1/p,\fz)$,
$$\dft=\dot{F}^{s+n(\tau-1/p)}_{\fz,\fz}(\rn)
=\dot{B}^{s,\tau+1/q-1/p}_{q,q}(\rn)
\lf(=\dot{F}^{s,\tau+1/q-1/p}_{q,q}(\rn)\ \mathrm{if}\
q<\fz\r)$$
with equivalent norms.

$\mathrm{(ii)}$ For all $p\in(0,\fz]$, $q\in(0,\fz)$ and $\tau\in(1/p,\fz)$, or
$q=\fz$ and $\tau\in[1/p,\fz)$,
$$\dbt=\dot{B}^{s+n(\tau-1/p)}_{\fz,\fz}(\rn)
=\dot{B}^{s,\tau+1/q-1/p}_{q,q}(\rn)$$
with equivalent norms.
\end{cor}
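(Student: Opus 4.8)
The plan is to read Corollary \ref{c1} as a purely formal consequence of Theorem \ref{t1}, used twice, supplemented by two elementary coincidences of spaces. Indeed the leading equalities $\dft=\dot{F}^{s+n(\tau-1/p)}_{\fz,\fz}(\rn)$ in (i) and $\dbt=\dot{B}^{s+n(\tau-1/p)}_{\fz,\fz}(\rn)$ in (ii) are \emph{verbatim} Theorem \ref{t1}(i) and (ii). So the only remaining work is to recognize the Besov-type space $\dot{B}^{s,\tau+1/q-1/p}_{q,q}(\rn)$ (and, when $q<\fz$, the Triebel-Lizorkin-type space $\dot{F}^{s,\tau+1/q-1/p}_{q,q}(\rn)$) as the same object.

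First I would apply Theorem \ref{t1}(ii) a \emph{second} time, now to $\dot{B}^{s,\wz\tau}_{q,q}(\rn)$ with $\wz\tau\equiv\tau+1/q-1/p$, interpreting its first index as $q$ and its fourth index as $\wz\tau$. The essential point, exactly the observation recorded just before Corollary \ref{c1}, is that the hypothesis of Theorem \ref{t1}(ii) is met after this shift: for $q\in(0,\fz)$ one has $\wz\tau>1/q$ precisely because $\tau>1/p$, while for $q=\fz$ one has $\wz\tau=\tau-1/p\ge0=1/q$ precisely because $\tau\ge1/p$. Theorem \ref{t1}(ii) then yields $\dot{B}^{s,\wz\tau}_{q,q}(\rn)=\dot{B}^{s+n(\wz\tau-1/q)}_{\fz,\fz}(\rn)$, and the smoothness bookkeeping $s+n(\wz\tau-1/q)=s+n(\tau+1/q-1/p-1/q)=s+n(\tau-1/p)$ identifies the right side with $\dot{B}^{s+n(\tau-1/p)}_{\fz,\fz}(\rn)$. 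This already completes the chain in (ii).

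To finish (i) I would invoke two trivial identities. On one hand, the $(\fz,\fz)$ sequence-space norms $\|\cdot\|_{\dot{f}^{\sigma}_{\fz,\fz}(\rn)}$ and $\|\cdot\|_{\dot{b}^{\sigma}_{\fz,\fz}(\rn)}$ are \emph{both} the single supremum $\sup_{Q\in\cq}|Q|^{-\sigma/n-1/2}|t_Q|$ used in the proof of Theorem \ref{t1}, so $\dot{F}^{\sigma}_{\fz,\fz}(\rn)=\dot{B}^{\sigma}_{\fz,\fz}(\rn)$ for every $\sigma\in\rr$; taking $\sigma=s+n(\tau-1/p)$ converts the $\dot{B}^{s+n(\tau-1/p)}_{\fz,\fz}(\rn)$ just produced into the $\dot{F}^{s+n(\tau-1/p)}_{\fz,\fz}(\rn)$ displayed in (i). On the other hand, when $q<\fz$ the equal-index coincidence $\dot{f}^{s,\wz\tau}_{q,q}(\rn)=\dot{b}^{s,\wz\tau}_{q,q}(\rn)$ (recalled just before Proposition \ref{p4}) gives $\dot{F}^{s,\wz\tau}_{q,q}(\rn)=\dot{B}^{s,\wz\tau}_{q,q}(\rn)$, supplying the parenthetical identification. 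Splicing these with the first equality from Theorem \ref{t1}(i) produces the full chain in (i).

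Since all analytic content is carried by Theorem \ref{t1}, I do not expect a genuine obstacle here; the only care needed is in the index arithmetic. The one spot to watch is the boundary case $q=\fz$: there the admissible range $\tau\in[1/p,\fz)$ is exactly what forces $\wz\tau=\tau-1/p$ into the closed range $[0,\fz)$ (i.e.\ $\wz\tau\ge1/q$ with $1/q=0$) demanded by the $q=\fz$ branch of Theorem \ref{t1}(ii), whereas for $q\in(0,\fz)$ the strict inequality $\tau>1/p$ matches the strict requirement $\wz\tau>1/q$, so that no endpoint is lost or gained in either part.
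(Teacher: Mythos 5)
Your proposal is correct and follows exactly the route the paper intends: the paper derives Corollary \ref{c1} as a ``direct consequence'' of Theorem \ref{t1}, with the displayed observation that $\tau+1/q-1/p>1/q$ when $\tau\in(1/p,\fz)$ being precisely the re-indexing check you carry out before applying Theorem \ref{t1} a second time to $\dot{B}^{s,\tau+1/q-1/p}_{q,q}(\rn)$. The remaining identifications ($\dot{F}^{\sigma}_{\fz,\fz}(\rn)=\dot{B}^{\sigma}_{\fz,\fz}(\rn)$ and the equal-index coincidence for $q<\fz$) and the smoothness arithmetic are exactly the implicit steps the paper leaves to the reader.
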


Another special case of Theorem \ref{t1}
is the following conclusion, which has independently interest, comparing with Proposition \ref{p1}(ii).

\begin{cor}\label{c2}
Let $s\in\rr$ and $p\in(0,\fz]$. Then $\dot{B}^{s,1/p}_{p,\fz}(\rn)
=\dot{B}^s_{\fz,\fz}(\rn)$ with equivalent norms.
\end{cor}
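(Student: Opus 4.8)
The plan is to read off Corollary \ref{c2} as the single special case $q=\fz$, $\tau=1/p$ of Theorem \ref{t1}(ii). First I would verify that this parameter choice is admissible. Theorem \ref{t1}(ii) has two branches: one requiring $q\in(0,\fz)$ with $\tau\in(1/p,\fz)$, and one requiring $q=\fz$ with $\tau\in[1/p,\fz)$. It is the second branch, whose $\tau$-interval is \emph{closed} at its left endpoint $1/p$, that makes the value $\tau=1/p$ legitimate. Thus Theorem \ref{t1}(ii) applies directly with $p\in(0,\fz]$, $q=\fz$ and $\tau=1/p$.

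Having checked admissibility, the proof is then a one-line substitution. With $\tau=1/p$ the shifted smoothness exponent furnished by Theorem \ref{t1}(ii) collapses, namely $s+n(\tau-1/p)=s+n(1/p-1/p)=s$, so the identity $\dbt=\dot{B}^{s+n(\tau-1/p)}_{\fz,\fz}(\rn)$ becomes precisely $\dot{B}^{s,1/p}_{p,\fz}(\rn)=\dot{B}^s_{\fz,\fz}(\rn)$ with equivalent norms, which is the assertion. I would also point out the parallel with Proposition \ref{p1}(ii): the present corollary is the $q=\fz$ Besov endpoint counterpart of the Triebel--Lizorkin identity $\dot{F}^{s,1/p}_{p,q}(\rn)=\dot{F}^s_{\fz,q}(\rn)$.

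There is no real obstacle beyond the admissibility check just made, and that check is the whole point: the $q=\fz$ regime is genuinely needed, since for finite $q$ the endpoint $\tau=1/p$ falls outside the hypotheses of Theorem \ref{t1}(ii), in accordance with the strict embedding recorded in Proposition \ref{p1}(iii). For completeness I could instead give a self-contained argument via the $\vz$-transform characterization in Proposition \ref{p2}(ii), reducing to the equality $\dot{b}^{s,1/p}_{p,\fz}(\rn)=\dot{b}^s_{\fz,\fz}(\rn)$ of the associated sequence spaces; there the lower bound $\|t\|_{\dot{b}^s_{\fz,\fz}(\rn)}\le\|t\|_{\dot{b}^{s,1/p}_{p,\fz}(\rn)}$ is immediate by restricting the supremum over $P$ to $P=Q$, while the reverse bound is the specialization to $\tau=1/p$, $q=\fz$ of the estimate already carried out in the proof of Theorem \ref{t1}(ii). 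Since citing Theorem \ref{t1}(ii) directly is cleaner, that is the route I would take.
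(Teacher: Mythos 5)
Your proposal is correct and matches the paper's own route: the paper presents Corollary \ref{c2} precisely as the special case $q=\fz$, $\tau=1/p$ of Theorem \ref{t1}(ii), where the closed endpoint $\tau\in[1/p,\fz)$ in the $q=\fz$ branch makes the substitution legitimate and $s+n(\tau-1/p)=s$. Your admissibility check and the remark contrasting this with the finite-$q$ situation of Proposition \ref{p1}(iii) are both accurate.
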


\begin{rem}\rm\label{r4}
We remark that Corollary \ref{c2} is sharp in the following sense:
for all $s\in\rr$ and $p,\,q\in(0,\fz)$, $\dot{B}^s_{\fz,q}(\rn)
\subsetneqq \dot{B}^{s,1/p}_{p,q}(\rn)$
by Proposition \ref{p1}(iii). This is totally different from
the case of the Triebel-Lizorkin spaces (see Proposition \ref{p1}(ii) again).
\end{rem}

As a direct consequence of Proposition \ref{p3} and
Theorem \ref{t1}, we deduce that
when $r\in(1,\fz)$, the space $CMO^{s,q}_r(\rn)$ is essentially the
Triebel-Lizorkin space.

\begin{cor}\label{c3}
Let $s\in\rr$. If $q\in(0,\fz)$ and $r\in (1,\fz)$ or $q=\fz$
and $r\in [1,\fz)$, then
$$CMO^{s,q}_r(\rn)=\dot{B}^{s,r/q}_{q,q}(\rn)
=\dot{F}^{s+n(r-1)/q}_{\fz,\fz}(\rn)\left(=\dot{F}^{s,r/q}_{q,q}(\rn)\ \mathrm{if}\ q<\fz\r)$$
with equivalent norms.
\end{cor}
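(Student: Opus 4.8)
The plan is to deduce the whole chain of identities from the specialization $p=q$ of Proposition \ref{p3}(i) and Theorem \ref{t1}, after first recording the elementary fact that the Triebel-Lizorkin-type and Besov-type norms coincide when $p=q$. Indeed, comparing Definitions \ref{d1} and \ref{d3} and using Tonelli's theorem to interchange the (nonnegative) finite integral $\int_P$ with the sum over dyadic scales, one checks that $\dot{F}^{s,\tau}_{q,q}(\rn)=\dot{B}^{s,\tau}_{q,q}(\rn)$ with \emph{identical} norms for every $s\in\rr$ and $\tau\in[0,\fz)$ (at a fixed $x$ and scale $2^{-j}$ only one dyadic $Q$ contributes, so the inner $\ell^q$ collapses to a single term and the two orders of summation agree). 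This identity settles the second equality in the corollary once the remaining ones are in place.

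First I would treat the main case $q\in(0,\fz)$ and $r\in(1,\fz)$. Proposition \ref{p3}(i) gives $CMO^{s,q}_r(\rn)=\dot{F}^{s,r/q}_{q,q}(\rn)$ with equivalent norms. Next I invoke Theorem \ref{t1}(i) with the choice $p=q$ and $\tau=r/q$; its hypothesis $\tau\in(1/p,\fz)$ reads $r/q>1/q$, i.e. exactly $r>1$, which holds. Since $\tau-1/p=r/q-1/q=(r-1)/q$, Theorem \ref{t1}(i) yields $\dot{F}^{s,r/q}_{q,q}(\rn)=\dot{F}^{s+n(r-1)/q}_{\fz,\fz}(\rn)$. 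Combining this with the $p=q$ coincidence $\dot{B}^{s,r/q}_{q,q}(\rn)=\dot{F}^{s,r/q}_{q,q}(\rn)$ identifies all four spaces listed in the statement.

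It remains to handle the endpoint $q=\fz$ and $r\in[1,\fz)$, which is degenerate since $r/q=0$ and $(r-1)/q=0$. Here Proposition \ref{p1}(i) reduces the Besov-type space to $\dot{B}^{s,0}_{\fz,\fz}(\rn)=\dot{B}^s_{\fz,\fz}(\rn)$ and the target space to $\dot{F}^{s+n(r-1)/q}_{\fz,\fz}(\rn)=\dot{F}^s_{\fz,\fz}(\rn)$, while Proposition \ref{p3}(i) applied with $q=\fz$ gives $CMO^{s,\fz}_r(\rn)=\dot{F}^{s,0}_{\fz,\fz}(\rn)=\dot{F}^s_{\fz,\fz}(\rn)$; in particular $CMO^{s,\fz}_r(\rn)$ does not depend on $r$. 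The three spaces then coincide by the classical identity $\dot{F}^s_{\fz,\fz}(\rn)=\dot{B}^s_{\fz,\fz}(\rn)$ (the homogeneous H\"older--Zygmund space).

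I do not expect any genuine obstacle, as the corollary is essentially a bookkeeping of parameters. The one point demanding care is the correct matching $p=q$, $\tau=r/q$ together with the verification that the strict inequality $\tau>1/p$ in Theorem \ref{t1}(i) is equivalent to $r>1$ (respectively $\tau\ge 1/p$ to $r\ge 1$ when $q=\fz$), so that the hypotheses of Theorem \ref{t1} and Proposition \ref{p3} are met precisely on the stated ranges of $r$; the endpoint coincidence $\dot{F}^s_{\fz,\fz}(\rn)=\dot{B}^s_{\fz,\fz}(\rn)$ is the only ingredient used beyond Propositions \ref{p1} and \ref{p3} and Theorem \ref{t1}.
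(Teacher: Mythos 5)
Your proposal is correct and follows exactly the route the paper intends: Corollary \ref{c3} is stated there as a direct consequence of Proposition \ref{p3}(i) and Theorem \ref{t1} with $p=q$ and $\tau=r/q$, and your parameter bookkeeping (including the check that $\tau>1/p$ is precisely $r>1$, the Tonelli identification $\dot{F}^{s,\tau}_{q,q}(\rn)=\dot{B}^{s,\tau}_{q,q}(\rn)$, which the paper also records as $\dot{f}^{s,\tau}_{q,q}(\rn)=\dot{b}^{s,\tau}_{q,q}(\rn)$, and the degenerate endpoint $q=\fz$) matches what the authors leave implicit.
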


This corollary can be re-written as follows, which implies that
the conclusions of Theorems \ref{ta} and \ref{tb}, and Corollary \ref{ca} are correct when $\tau\in(1/p,\fz)$.

\begin{cor}\label{c4}
Let $s\in\rr$.

{\rm(i)} If $p\in(0,\fz)$, $q\in (0,\fz)$ and $\tau\in(1/p,\fz)$
or $q=\fz$ and $\tau\in [1/p,\fz)$, then
$$\dft=\dot{F}^{s+n(\tau-1/p)}_{\fz,\fz}(\rn)=CMO^{s,q}_{\tau q+1-q/p}(\rn)$$
with equivalent norms.

{\rm(ii)} If $p\in(0,\fz]$, $q\in (0,\fz)$ and $\tau\in(1/p,\fz)$
or $q=\fz$ and $\tau\in [1/p,\fz)$, then
$$\dbt=\dot{B}^{s+n(\tau-1/p)}_{\fz,\fz}(\rn)=CMO^{s,q}_{\tau q+1-q/p}(\rn)$$
with equivalent norms.
\end{cor}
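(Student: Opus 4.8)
The plan is to read off Corollary \ref{c4} directly from Theorem \ref{t1} and Corollary \ref{c3}; no new estimate is needed, only a matching of indices. First I would abbreviate the Carleson exponent occurring in the statement by $r:=\tau q+1-q/p$ and record the elementary identity
$$\frac{r-1}{q}=\frac{\tau q-q/p}{q}=\tau-\frac1p.$$
When $q\in(0,\fz)$ and $\tau\in(1/p,\fz)$, the same computation gives $r-1=q(\tau-1/p)>0$ and $r<\fz$, so that $r\in(1,\fz)$ and Corollary \ref{c3} may be applied with this value of $r$.

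For part (i), Theorem \ref{t1}(i) already furnishes the first equality $\dft=\dot{F}^{s+n(\tau-1/p)}_{\fz,\fz}(\rn)$. For the second, I would insert $r$ into Corollary \ref{c3}, which yields $CMO^{s,q}_r(\rn)=\dot{F}^{s+n(r-1)/q}_{\fz,\fz}(\rn)$; by the displayed identity the smoothness index $s+n(r-1)/q$ coincides with $s+n(\tau-1/p)$, whence $CMO^{s,q}_{\tau q+1-q/p}(\rn)=\dot{F}^{s+n(\tau-1/p)}_{\fz,\fz}(\rn)$ and (i) follows. Part (ii) is the same argument with Theorem \ref{t1}(ii) in place of (i): it gives $\dbt=\dot{B}^{s+n(\tau-1/p)}_{\fz,\fz}(\rn)$, and I would then use the classical coincidence $\dot{B}^\sigma_{\fz,\fz}(\rn)=\dot{F}^\sigma_{\fz,\fz}(\rn)$ with $\sigma=s+n(\tau-1/p)$ to identify this Besov space with the space $\dot{F}^{s+n(\tau-1/p)}_{\fz,\fz}(\rn)=CMO^{s,q}_{\tau q+1-q/p}(\rn)$ produced above.

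The endpoint $q=\fz$, $\tau\in[1/p,\fz)$ is obtained in exactly the same way from the $q=\fz$ clauses of Theorem \ref{t1} and Corollary \ref{c3}. I do not anticipate any genuine obstacle here, since all the analytic content is already contained in Theorem \ref{t1} and Corollary \ref{c3}; the only points requiring attention are the verification that $r=\tau q+1-q/p$ falls in the admissible range $(1,\fz)$ demanded by Corollary \ref{c3} and the bookkeeping ensuring that the two expressions $s+n(r-1)/q$ and $s+n(\tau-1/p)$ for the smoothness of the limiting $\dot{F}_{\fz,\fz}$-space agree.
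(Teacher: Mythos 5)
Your proposal is correct and follows essentially the same route as the paper, which presents Corollary \ref{c4} precisely as a re-indexing of Corollary \ref{c3} (itself a direct consequence of Proposition \ref{p3} and Theorem \ref{t1}) via the substitution $r=\tau q+1-q/p$, so that $(r-1)/q=\tau-1/p$ and $r\in(1,\fz)$ exactly when $\tau\in(1/p,\fz)$. The only ingredient you make explicit that the paper leaves implicit is the classical coincidence $\dot{B}^\sigma_{\fz,\fz}(\rn)=\dot{F}^\sigma_{\fz,\fz}(\rn)$, which is harmless.
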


\begin{rem}\rm\label{r5} Although the assertion
that for all $s\in\rr$, $p,\,q\in(0,\fz)$
and $\tau\in(1/p,\fz)$,
$\dft=CMO^{s,q}_{\tau q+1-q/p}(\rn)$ with equivalent norms, was claimed in
\cite[Corollary 6]{lw11}, its proof therein is problematic;
see Remark \ref{r3}.
\end{rem}

We point out that Theorem \ref{t1}
is also true for inhomogeneous Triebel-Lizorkin-type spaces
$F^{s,\tau}_{p,q}(\rn)$ and Besov-type
spaces $B^{s,\tau}_{p,q}(\rn)$ introduced in \cite{ysy}.
Let $\Phi\in\cs(\rn)$
such that
\begin{equation}\label{e1.2}
\supp \widehat{\Phi}\subset \{\xi\in\rn:\ |\xi|\le2\}\quad \mathrm{and}\quad |\widehat{\Phi}(\xi)|\ge C>0\hs \mathrm{if}\hs |\xi|\le 5/3.
\end{equation}
The inhomogeneous Triebel-Lizorkin-type space $F^{s,\tau}_{p,q}(\rn)$
and the inhomogeneous Besov-type space $B^{s,\tau}_{p,q}(\rn)$
in \cite{ysy} were defined as follows.

\begin{defn}\rm\label{d5} Let $s\in\rr$, $\tau\in[0,\fz)$ and $q\in(0,\fz]$.
Let $\vz$ be as in \eqref{e1.1} and $\vz_0\equiv\Phi$ be as in \eqref{e1.2}.

(i) If $p\in(0,\fz)$, the \emph{inhomogeneous Triebel-Lizorkin-type space}
$F^{s,\tau}_{p,q}(\rn)$ is defined
to be the space of all $f\in \mathcal{S}'(\rn)$ such that
$$\|f\|_{F^{s,\tau}_{p,q}(\rn)}\equiv
\sup_{P\in\mathcal{Q}}\frac1{|P|^{\tau}}\left\{\int_P\left[
\sum_{j=\max\{0,\,j_P\}}^\fz  2^{js q}|\vz_j\ast f(x)|^q\right]^{p/q}\,dx\right\}^{1/p}<\fz$$
with the usual modification made when $q=\fz$.

(ii) If $p\in(0,\fz]$, the \emph{inhomogeneous Besov-type space} $B^{s,\tau}_{p,q}(\rn)$ is defined to be the space
of all $f\in \mathcal{S}'(\rn)$ such that
$$\|f\|_{B^{s,\tau}_{p,q}(\rn)}\equiv
\sup_{P\in\mathcal{Q}}\frac1{|P|^{\tau}}\left\{\sum_{j=\max\{0,\,j_P\}}^\fz  2^{js q}\left[\int_P
|\vz_j\ast f(x)|^p\,dx\right]^{q/p}\right\}^{1/q}<\fz$$
with the usual modifications made when $p=\fz$ or $q=\fz$.
\end{defn}

All conclusions of Propositions \ref{p1}
through \ref{p4} have inhomogeneous versions
and we omit the details.
Moreover, we have the following conclusions, whose proofs are similar to that of Theorem \ref{t1}. We also omit the details.

\begin{thm}\label{t2}
Let $s\in\rr$ and $q\in (0,\fz]$.

$\mathrm{(i)}$ For all $p\in(0,\fz)$, $q\in(0,\fz)$ and $\tau\in(1/p,\fz)$, or
$q=\fz$ and $\tau\in[1/p,\fz)$,
$$F^{s,\tau}_{p,q}(\rn)=F^{s+n(\tau-1/p)}_{\fz,\fz}(\rn)$$
with equivalent norms.

$\mathrm{(ii)}$ For all $p\in(0,\fz]$, $q\in(0,\fz)$ and $\tau\in(1/p,\fz)$, or
$q=\fz$ and $\tau\in[1/p,\fz)$,
$$B^{s,\tau}_{p,q}(\rn)=B^{s+n(\tau-1/p)}_{\fz,\fz}(\rn)$$
with equivalent norms.
\end{thm}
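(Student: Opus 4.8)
The plan is to follow the proof of Theorem \ref{t1} line by line, the only new feature being the lower summation index $\max\{0,j_P\}$ appearing in Definition \ref{d5}. First I would invoke the inhomogeneous $\vz$-transform characterizations of $F^{s,\tau}_{p,q}(\rn)$ and $B^{s,\tau}_{p,q}(\rn)$ from \cite{ysy} (the inhomogeneous analogues of Proposition \ref{p2}), together with the classical $\vz$-transform characterization of $F^{s+n(\tau-1/p)}_{\fz,\fz}(\rn)$ and $B^{s+n(\tau-1/p)}_{\fz,\fz}(\rn)$ from \cite{fj90}. This reduces both (i) and (ii) to the equivalence of the associated inhomogeneous sequence-space norms, in which every $j$-sum runs from $\max\{0,j_P\}$ and only cubes $Q$ with $\ell(Q)\le1$ occur. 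Throughout I abbreviate the target norm by $\|t\|_\ast\equiv\sup_{\ell(Q)\le1}|Q|^{-s/n-(\tau-1/p)-1/2}|t_Q|$.

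For part (i), the inequality $\|t\|_\ast\le\|t\|_{f^{s,\tau}_{p,q}(\rn)}$ is immediate on testing the supremum defining the type-norm against a single cube $P=Q$, exactly as in Theorem \ref{t1}(i). For the reverse inequality I would factor out $\|t\|_\ast$ from the inner sum via $|Q|^{-s/n-1/2}|t_Q|\le\|t\|_\ast|Q|^{\tau-1/p}$, reducing matters to the geometric factor $\sum_{j=\max\{0,j_P\}}^\fz2^{-jn(\tau-1/p)q}$ (a supremum when $q=\fz$). The one genuinely new point is that the behaviour of this factor depends on the size of $P$, so I split into two cases.

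When $\ell(P)\le1$, one has $j_P\ge0$ and $\max\{0,j_P\}=j_P$, so the computation is verbatim that of Theorem \ref{t1}(i): since $\tau>1/p$ the series sums to $\sim2^{-j_Pn(\tau-1/p)q}=|P|^{(\tau-1/p)q}$, and after integrating over $P$ and dividing by $|P|^\tau$ the contribution is $\ls\|t\|_\ast$. When $\ell(P)>1$, one has $j_P<0$ and $\max\{0,j_P\}=0$, so $\sum_{j=0}^\fz2^{-jn(\tau-1/p)q}\sim1$ is merely a constant independent of $P$; integrating over $P$ and normalizing by $|P|^{-\tau}$ then leaves the factor $|P|^{1/p-\tau}$, which is $\le1$ because $\tau>1/p$ and $|P|>1$. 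In both cases the supremum over $P$ is $\ls\|t\|_\ast$, proving (i). Part (ii) is identical after interchanging the $j$-sum with the spatial integral, as in Theorem \ref{t1}(ii), the same two-case analysis of $\max\{0,j_P\}$ applying verbatim. The limiting case $q=\fz$, $\tau\ge1/p$ is handled by replacing the geometric series with a supremum, which remains controlled since $\tau-1/p\ge0$.

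The main obstacle is precisely the large-cube case $\ell(P)>1$, which has no homogeneous counterpart: there one loses the exact cancellation $2^{-j_Pn(\tau-1/p)q}=|P|^{(\tau-1/p)q}$ that drives the homogeneous estimate, and must instead extract the decaying power $|P|^{1/p-\tau}$ from the strict inequality $\tau>1/p$ to absorb the volume factor $|P|$ produced by the spatial integral. Everything else is a routine transcription of the proof of Theorem \ref{t1}.
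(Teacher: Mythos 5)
Your proposal is correct and takes essentially the same route as the paper, which omits the proof of Theorem \ref{t2} entirely with the remark that it is ``similar to that of Theorem \ref{t1}''; your transcription, including the one genuinely new case $\ell(P)>1$ where the geometric factor is $\sim1$ and the normalization $|P|^{1/p-\tau}\le1$ (with equality allowed only when $q=\fz$ and $\tau=1/p$) replaces the exact cancellation of the homogeneous computation, is exactly the adaptation the authors intend.
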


\begin{rem}\rm\label{r6} It was asked in \cite[p.\,168,\,Remark 6.11(i)]{ysy}
that for which set of parameters $p,\,q,\,\tau$, the spaces
$F^{s,\tau}_{p,q}(\rn)$ and $B^{s,\tau}_{p,q}(\rn)$ coincide
with the H\"older-Zygmund spaces? Some special cases were obtained in
\cite[p.\,167,\,Theorem 6.9]{ysy}. Obviously, Theorem \ref{t2}
above gives a complete answer to this question.
\end{rem}

From Theorem \ref{t2}, we also deduce inhomogeneous versions
of all conclusions in Corollaries \ref{c1} through \ref{c4}.
We omit the details again.

\medskip

\noindent{\bf Acknowledgements.} The authors would like to thank Professor
Marcin Bownik and Professor Winfried Sickel for some helpful
discussions on the subject of this paper. The authors also wish to express their
sincerely thanks to the referees for their very carefully reading and
valuable remarks which improve the presentation of this article.

\bigskip

Dachun Yang and Wen Yuan

\medskip

School of Mathematical Sciences, Beijing Normal University,
Laboratory of Mathematics and Complex Systems, Ministry of
Education, Beijing 100875, People's Republic of China

\smallskip

{\it E-mails}: \texttt{dcyang@bnu.edu.cn} (D. Yang)

\hspace{1.55cm}\texttt{wenyuan@bnu.edu.cn} (W. Yuan)


\begin{thebibliography}{99}

\bibitem{dx} G. Dafni and J. Xiao,
Some new tent spaces and duality theorems for fractional Carleson
measures and $Q_\alpha({\mathbb R}^n)$, J. Funct. Anal. 208 (2004),
377-422.

\vspace{-0.3cm}

\bibitem{ejpx} M. Ess\'en, S. Janson, L. Peng and J. Xiao,
$Q$ spaces of several real variables, Indiana Univ. Math. J. 49 (2000)
575-615.

\vspace{-0.3cm}

\bibitem{fj90} M.~Frazier and B.~Jawerth,
A discrete transform and decompositions of distribution spaces,
J.~Funct.~Anal. 93 (1990), 34-170.

\vspace{-0.3cm}

\bibitem{lw0} C.-C. Lin and K. Wang,
The $T1$ theorem for Besov spaces, Preprint, January 30, 2007.

\vspace{-0.3cm}

\bibitem{lw11} C.-C. Lin and K. Wang,
Equivalency between the generalized Carleson measure spaces and
Triebel-Lizorkin-type spaces, Taiwanese J. Math. 15 (2011), 919-926.

\vspace{-0.3cm}

\bibitem{lw} C.-C. Lin and K. Wang,
Generalized Carleson measure spaces and their applications,
Preprint.

\vspace{-0.3cm}

\bibitem{st}
Y. Sawano and H. Tanaka, Decompositions of Besov-Morrey spaces and
Triebel-Lizorkin-Morrey spaces, Math. Z. 257 (2007), 871-904.

\vspace{-0.3cm}

\bibitem{syy}
Y. Sawano, D. Yang and W. Yuan, New applications of Besov-type and Triebel-Lizorkin-type spaces, J. Math. Anal. Appl. 363 (2010), 73-85.

\vspace{-0.3cm}

\bibitem{tx}
L. Tang and J. Xu, Some properties of Morrey type Besov-Triebel
spaces, Math. Nachr. 278 (2005), 904-914.

\vspace{-0.3cm}

\bibitem{t83} H. Triebel,
Theory of Function Spaces, Birkh\"auser Verlag, Basel, 1983.

\vspace{-0.3cm}

\bibitem{t92} H. Triebel, Theory of Function Spaces II,
Birkh\"auser Verlag, Basel, 1992.

\vspace{-0.3cm}

\bibitem{t06} H. Triebel, Theory of Function Spaces III,
Birkh\"auser Verlag, Basel, 2006.

\vspace{-0.3cm}

\bibitem{x} J. Xiao, Holomorphic $Q$ Classes, Lecture Notes in
Math., 1767, Springer, Berlin, 2001.

\vspace{-0.3cm}

\bibitem{x06} J. Xiao, Geometric $Q_p$ Functions,
Birkh\"auser Verlag, Basel, 2006.

\vspace{-0.3cm}

\bibitem{yy1}
D. Yang and W. Yuan, A new class of function spaces connecting Triebel-Lizorkin
spaces and Q spaces, J. Funct. Anal. 255 (2008), 2760-2809.

\vspace{-0.3cm}

\bibitem{yy2}
D. Yang and W. Yuan, New Besov-type spaces and Triebel-Lizorkin-type spaces
including Q spaces, Math. Z. 265 (2010), 451-480.

\vspace{-0.3cm}

\bibitem{ysy} W. Yuan, W. Sickel and D. Yang,
Morrey and Campanato Meet Besov, Lizorkin and Triebel, Lecture Notes
in Mathematics 2005, Springer-Verlag, Berlin, 2010, xi+281 pp.

\end{thebibliography}
\end{document}